\newtheorem{theorem}{$\quad$Theorem}[section]
\newtheorem{corollary}[theorem]{$\quad$Corollary}
\newtheorem{lemma}[theorem]{$\quad$Lemma}
\newcommand{\R}{{\mathbb R}}
\newcommand{\C}{{\mathbb C}}
\newcommand{\Z}{{\mathbb Z}}
\theoremstyle{definition}
\newtheorem{remark}[theorem]{$\quad$Remark}
\newcounter{bibno}
\begin{document}
\markboth{Johan Bj\"orklund}
{Real algebraic knots of low degree}
\title{Real algebraic knots of low degree}

\author{Johan Bj\"orklund}
\address{Department of mathematics, Uppsala University\\ Box 480, 751 06 Uppsala, Sweden\\ email:bjorklund@math.uu.se}
\date{October 14, 2010}

\begin{abstract}
In this paper we study rational real algebraic knots in $\R P^3$.  We show that two real rational algebraic knots of degree $\leq5$ are rigidly isotopic if and only if their degrees and encomplexed writhes are equal. We also show that any smooth irreducible knot which admits a plane projection with less than or equal to four crossings has a rational parametrization of degree $\leq 6$. Furthermore an explicit construction of rational knots of a given degree with arbitrary encomplexed writhe (subject to natural restrictions) is presented.

\end{abstract}

\maketitle



\clearpage

\section{Introduction}

Classical knot theory is the study of smooth embeddings of $S^1$ into $\R^3$ or $S^3$ up to smooth isotopy. In this paper we study embeddings of $S^1$ into $\R P^3$ realized by polynomial functions from $\R P^1$ into $\R P^3$.\\

Let $K\subset\R P^3\subset \C P^3$ be a smooth knot. We say that a function $P:\C P^1\to\C P^3$ is a {\em rational parametrization} of degree $d$ of $K$ if $P$ is a smooth embedding with coordinate functions which are real homogenous polynomials of degree $d$ and if $P(\R P^1)=K$. In this case we also call $K=P(\R P^1)$ a {\em rational knot} and call $P(\C P^1)=\C K$ its {\em complexification}. Since smooth functions can be approximated arbitrarily well by polynomials, any smooth isotopy class of knots in $\R P ^3$ can be represented by a rational knot. We define the {\it rational degree} of a knot $K$ as the smallest number $d$ such that $K$ is smoothly isotopic to a knot having a rational parametrization of degree $d$ and show the following result for knots admitting plane projections with few double points.

\begin{theorem} \label{main}
A smooth irreducible knot in $\R P^3$ which admits a plane projection with at most four transversal double points has rational degree at most 6.
\end{theorem}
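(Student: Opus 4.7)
The plan is to proceed by a case analysis over the (short) list of smooth isotopy types of knots in $\R P^3$ admitting a plane projection with at most four transversal double points. I would first enumerate these classes by considering diagrams on $\R P^2$ with $\le 4$ crossings, applying Reidemeister-type simplifications, and keeping track of the two free homotopy classes in $\R P^3$ (null-homotopic and the class of a projective line). The result is a finite list containing the two unknotted classes, the $(2,3)$-torus knot class, the figure-eight class, and a small number of additional $\R P^3$-specific classes coming from diagrams in which an arc passes through the line at infinity of the chosen projection.

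For each such class I would exhibit a rational parametrization of degree $d \le 6$ in two steps. In step (a) I would realize the plane shadow as a real rational plane curve of degree $d$ whose complex completion has exactly one node at each of the prescribed real crossings and fills out its arithmetic-genus quota $\binom{d-1}{2}$ with complex-conjugate pairs of nodes, leaving the real picture unchanged. At $d = 6$ there are $10$ available nodes, leaving ample room for up to four real crossings and three conjugate pairs of imaginary nodes; for smaller diagrams the count is even more generous. In step (b) I would adjoin a fourth homogeneous coordinate given by a height polynomial of degree $d$ chosen so that the two parameter preimages at each real crossing are separated with the prescribed sign (i.e.\ over/under pattern). This last step reduces to a finite collection of open sign conditions on the coefficients of the height polynomial, and can be met by ordinary polynomial interpolation on $\R P^1$.

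The main obstacle is step (a). A generic real rational plane sextic will tend to acquire unwanted singularities (cusps, tacnodes, or stray real nodes) rather than the clean arrangement of four real nodes and three complex-conjugate pairs that is needed here, and the real node pattern must moreover match the given knot diagram. I would handle this either by writing an explicit family of sextics, for instance small rational perturbations of a reducible degenerate curve such as the union of two cubics or of a conic and a quartic, and then tuning coefficients so that the desired node configuration survives in real form, or else by starting from a symmetric configuration whose symmetry dictates where the singularities appear and then perturbing rationally inside the symmetric family. The four-crossing diagrams are the tightest case, and they are what drive the bound to $6$; diagrams with at most three crossings admit easier constructions in lower degree and so do not threaten the estimate.
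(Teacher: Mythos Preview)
Your plan and the paper's proof share the outer shell---enumerate the finitely many smooth isotopy classes with $\le 4$ crossings (the paper cites Drobotukhina's tables for the projective list and Rolfsen's for the affine one) and then build a rational representative of degree $\le 6$ for each---but the construction steps are genuinely different. The paper never tries to realize a prescribed planar shadow as an algebraic curve. Instead it uses its gluing Theorem~\ref{combine}: if rational knots of degrees $m$ and $n$ meet transversally in a single point, a small algebraic smoothing produces a rational knot of degree $m+n$ isotopic to the resolved union. Every $3$- or $4$-crossing knot not already covered by the degree~$\le 5$ classification is then obtained as a suitable ``degree $4+2$'' or ``$(2+2)+2$'' gluing, with the pictures in Section~5 doing the bookkeeping. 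This sidesteps entirely the problem you flag as your main obstacle.

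Your step~(a) is where the real work hides, and I would not call it settled by the proposal. Arranging a real rational plane sextic whose real picture has \emph{exactly} the four nodes of a given diagram, in the given cyclic/combinatorial pattern, with the remaining six nodes in three conjugate pairs and no extra real branches or singularities, is a delicate real-algebraic construction; ``perturb a union of two cubics or a conic and a quartic'' is essentially the $2$-dimensional shadow of the paper's $3$-dimensional gluing, but you then still owe the argument that the perturbation can be made rational, that the real node pattern matches the specific diagram, and that the subsequent height lift in step~(b) yields an embedding rather than introducing new real intersections. None of this is impossible, but it is several nontrivial verifications per diagram. The paper's gluing theorem buys exactly this: it packages the perturbation, the degree count, and the embeddedness into one statement proved once in $\R P^3$, so the per-knot work reduces to drawing the right picture of intersecting low-degree pieces.
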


Theorem \ref{main} is proved in Section 5. A knot is {\em irreducible} if it is not {\em reducible}, that is, a connected sum of two non-trivial knots  such that one of them is contained in a ball.\\

The space of rational parametrizations of real rational curves of degree $d$ forms an open subset of the real projective space $\R P^{4d+3}$. The subspace of parametrizations which are not embeddings forms a closed subspace of codimension one which we call the {\em discriminant}. The complement of the discriminant is the {\em space of rational knot parametrizations}.

Two rational knots are {\em rigidly isotopic} if there exists a continuous path $P_t,0\leq t\leq1$ in the space of rational knot parametrizations connecting them. This notion of rigid isotopy was introduced by Rokhlin in \cite{ROHK} for real algebraic plane projective curves.  Rigid isotopy is obviously more refined than smooth isotopy since it preserves degree. More interestingly, there are smoothly isotopic rational knots of the same degree which are not rigidly isotopic. Such knots were found by Viro in \cite{VIRO} and were distinguished by their encomplexed writhe. The encomplexed writhe of a real algebraic knot $K$ is a sum of signs over the double points in any generic plane projection. In particular, also solitary double points (i.e. double points with preimages in complex conjugate branches) contributes. In this paper we denote the encomplexed writhe by $w$. The following theorem gives a complete rigid isotopy classification of rational knots up to degree 5.
\clearpage

\begin{theorem}\label{rigidclass}
There are the following 15 rigid isotopy classes of rational knots of degree $d\leq5$.
\begin{enumerate}
\item For $d=1$ there is only one rigid isotopy class:
\begin{itemize}
 \item the line with $w=0$.
\end{itemize}
\item For $d=2$ there is only one rigid isotopy class:
\begin{itemize}
 \item the circle with $w=0$.
\end{itemize}

\item For $d=3$ there are two rigid isotopy classes:
\begin{itemize}
 \item a line with $w=1$ and
 \item a line with $w=-1$.
\end{itemize}

\item For $d=4$ there are five rigid isotopy classes:
\begin{itemize}
 \item a circle with $w=2$,
 \item a circle with $w=0$,
 \item a circle with $w=-2$,
 \item a twocrossing knot (see Fig. \ref{upto5}) with $w=4$ and
 \item a mirror image of a twocrossing knot with $w=-4$.
\end{itemize}

\item For $d=5$ there are seven rigid isotopy classes:
\begin{itemize}
 \item a line with $w=2$,
 \item a line with $w=0$,
 \item a line with $w=-2$,
 \item a long trefoil (see Fig. \ref{upto5}) with $w=4$,
 \item a mirror image of a long trefoil with $w=-4$,
 \item a projective $5_3-$knot (see Fig. \ref{upto5}) with $w=6$ and
 \item a mirror image of a projective $5_3-$knot with $w=-6$.
\end{itemize}
\end{enumerate}
In particular, degree and encomplexed writhe form a complete rigid isotopy invariant for $d\leq5$.
\end{theorem}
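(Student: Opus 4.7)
The plan is to establish the theorem in two halves: first, that the listed classes are pairwise rigidly non-isotopic, and second, that every rational knot of degree $d\leq 5$ belongs to one of the listed classes.

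The first half is essentially automatic. Degree is obviously preserved under rigid isotopy, and by Viro's result in \cite{VIRO} the encomplexed writhe is a rigid isotopy invariant. Inspecting the list, within each fixed $d\leq 5$ the listed values of $w$ are pairwise distinct, so no two of the listed classes can be rigidly isotopic.

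For the second half, I would first bound the possible values of $w$ for each degree. A smooth rational curve of degree $d$ in $\R P^3$, projected generically to a plane, produces a rational nodal plane curve of degree $d$; by the genus formula its complexification carries $(d-1)(d-2)/2$ nodes, distributed among real crossings, solitary real double points, and complex conjugate pairs of nodes. Since the encomplexed writhe is a signed count (in Viro's convention) of contributions coming from these nodes, a case analysis of the admissible distributions together with parity restrictions coming from how the real and imaginary branches of the projection glue up will leave only the finite list of $w$ stated. Once the range of $w$ is pinned down, I would exhibit an explicit parametrization for each admissible pair $(d,w)$, using the construction presented later in the paper.

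The remaining, and main, step is then to show that any two rational knots sharing the same pair $(d,w)$ are rigidly isotopic. For $d=1$ and $d=2$ this is transparent: lines are parametrized by the connected Grassmannian $G(2,4)$, and smooth conics in $\R P^3$ are easily verified to form a single path component of the parametrization space. For $d=3$ any rational parametrization is projectively equivalent to a twisted cubic, and the space of such splits cleanly into the two sign-of-writhe strata. The heart of the argument lies in $d=4$ and $d=5$: given an arbitrary parametrization of that degree, I would inspect a generic plane projection, perform rigid moves that either cancel opposite-sign pairs of real crossings or exchange a real crossing for a solitary double point across a tangency wall of the discriminant, and reduce to a normal form matching one of the named representatives.

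The principal obstacle will be precisely this last connectedness step for $d=4$ and $d=5$. Unlike smooth isotopy, rigid isotopy does not license free Reidemeister moves — only those deformations realizable by a path in the parametrization space $\R P^{4d+3}$ that avoids the codimension-one discriminant of non-embedded parametrizations. Carrying out the reduction to a normal form therefore requires checking which pairs of double points can be brought together and annihilated through a tangency while remaining inside the smooth embedding locus, and it is the explicit, low-dimensional nature of the discriminant in degrees $4$ and $5$ that makes such a verification feasible; in higher degrees no analogous argument is available.
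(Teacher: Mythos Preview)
Your outline correctly identifies the two halves of the argument and handles the easy cases $d\le 3$ in essentially the same way as the paper (lines, conics, and the identification of $\mathcal{K}_3$ with $PGL_4(\R)$). The distinguishing of classes via the pair $(d,w)$ and the bound on admissible writhes are also in line with the paper.

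The genuine gap is in your treatment of $d=4$ and $d=5$. You propose to take an arbitrary parametrization, project to a plane, and then ``perform rigid moves that either cancel opposite-sign pairs of real crossings or exchange a real crossing for a solitary double point'' until a normal form is reached. But this is precisely the statement that needs proof, not a method for proving it: there is no a priori reason why two opposite-sign crossings in a projection can be brought together and annihilated by a path in $\mathcal{K}_d$, and you give no mechanism for producing such paths. Acknowledging that ``rigid isotopy does not license free Reidemeister moves'' and that the discriminant is ``low-dimensional'' does not supply the missing argument.

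The paper takes a quite different route. Rather than working knot-by-knot, it analyzes the global stratification of the discriminant $\Sigma\subset\mathcal{C}_d$. For $d=4$ it classifies the \emph{walls} (components of $\Sigma^1\setminus\Sigma^2$, i.e.\ curves with a single nodal self-intersection) up to the $PSL_4(\R)\times PSL_2(\R)$ action, reducing every wall to one of a handful of explicit canonical forms; since there are at most three walls, the complement has at most four adjacent regions, and one checks directly which writhes occur. For $d=5$ the paper goes one stratum deeper: it first shows every component of $\mathcal{K}_5$ is adjacent to an \emph{edge} (a curve with exactly two self-intersections), then classifies all edges into a short explicit list of canonical forms, and finally uses a ``sliding'' lemma (moving along a wall toward an edge of lower writhe by varying the height function over a fixed planar projection) to show that the knots adjacent to the various edges with the same writhe actually coincide. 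This structural analysis of $\Sigma$ is the key idea your proposal is missing.
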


\begin{figure}
\includegraphics[scale=0.35]{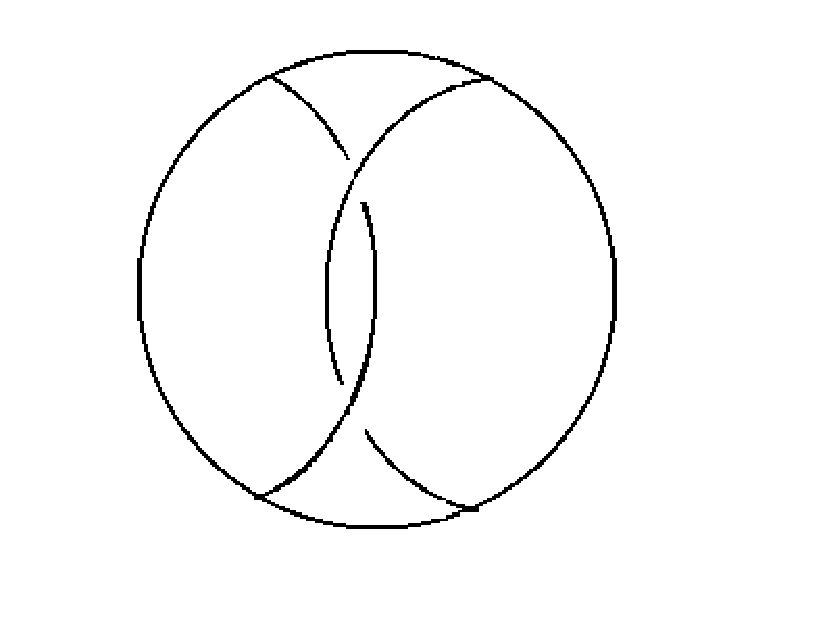}
\includegraphics[scale=0.30]{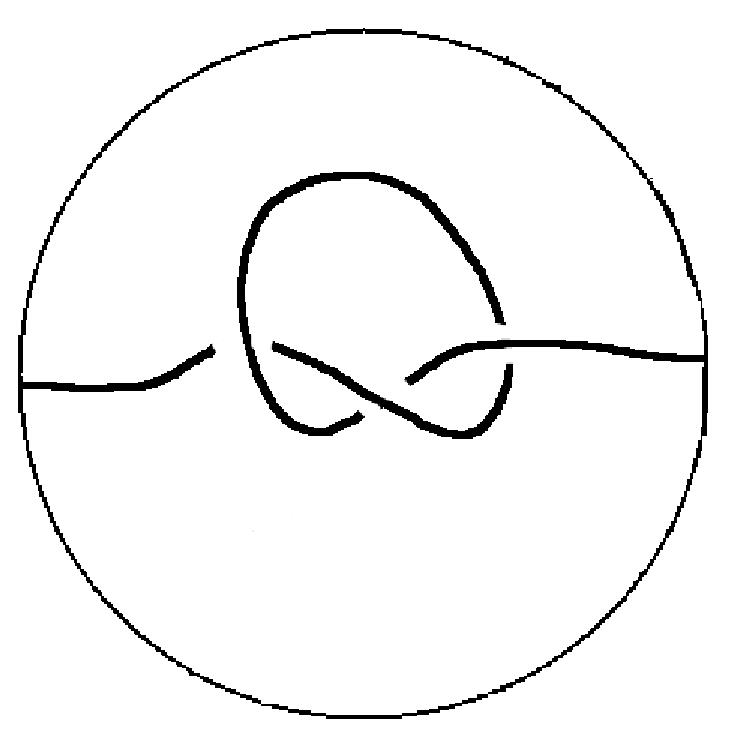}
\includegraphics[scale=0.30]{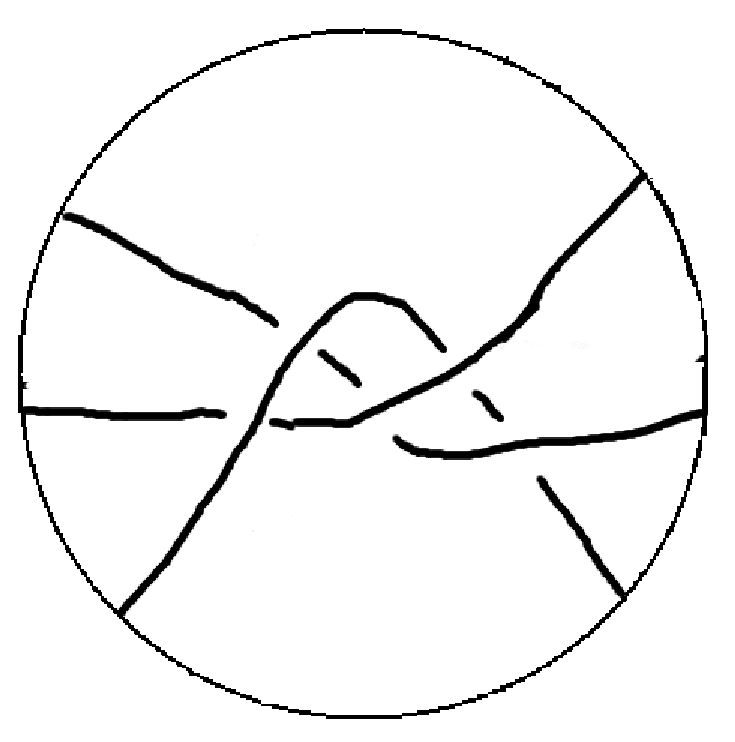}
\caption{Twocrossing knot, long trefoil and the projective $5_3-$knot}
\label{upto5}
\end{figure}

Theorem \ref{rigidclass} is proved in Section 4. The last statement of Theorem \ref{rigidclass} does not hold true for higher degrees, see Remark \ref{deg6counter} for a counterexample in degree 6.

We also consider the problem of realizing knots with different encomplexed writhes. It is straightforward to show that the encomplexed writhe $w$ of a rational curve of degree $d$ must satisfy $|w|\leq\frac{(d-1)(d-2)}{2}$ and $w\equiv\frac{(d-1)(d-2)}{2}\pmod{2}$. In \cite{VIRO} Viro states that the encomplexed writhe for a given degree attains all integer values subject to these restrictions. We give an explicit method for construction of rational degree $d$ knots $K_d^w$ of any writhe $w$ satisfying these conditions. More precisely we show the following result.

\begin{theorem}\label{allwrithe}
For each degree $d$ and writhe $w$ such that $|w|\leq\frac{(d-1)(d-2)}{2}$ and $w\equiv\frac{(d-1)(d-2)}{2}\pmod{2}$ the rational knot $K_d^w$ constructed as described in Section \ref{kdw} has degree $d$ and writhe $w$. Thus giving rational knots of each possible writhe for a given degree $d$.
\end{theorem}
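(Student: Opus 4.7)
The plan is to verify, for the explicit family $K_d^w$ constructed in Section \ref{kdw}, that (i) the parametrizing map has degree exactly $d$, (ii) it is a smooth embedding of $\R P^1$ into $\R P^3$, and (iii) the encomplexed writhe of a generic plane projection equals $w$. Property (i) is immediate from inspection of the coordinate polynomials of the construction, so I would concentrate on (ii) and (iii).

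For (ii), the condition that the parametrization $P_{d,w}:\R P^1\to\R P^3$ is an embedding reduces to the absence of real pairs $(t_1,t_2)$ with $t_1\neq t_2$ and $P_{d,w}(t_1)=P_{d,w}(t_2)$, together with $dP_{d,w}$ never vanishing on $\R P^1$. In an explicit construction these conditions are encoded by the vanishing loci of certain resultants and discriminants in the parameters of the construction, and one picks these parameters so that none of these polynomials vanish. The same genericity ensures that a chosen linear projection $\pi:\R P^3\to\R P^2$ is generic with respect to the curve, so that the plane image $\pi(\C K_d^w)\subset\C P^2$ has only ordinary nodes.

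For (iii), the key input is the genus formula: since $\C K_d^w$ is the image of $\C P^1$, it has geometric genus $0$, and therefore its plane image $\pi(\C K_d^w)$ is a rational nodal plane curve of degree $d$ in $\C P^2$ with exactly $\delta=\frac{(d-1)(d-2)}{2}$ nodes. Under complex conjugation these nodes split into three classes: real nodes with two real branches (genuine crossings, contributing $\pm 1$ to $w$), real nodes with two complex conjugate branches (solitary double points, also contributing $\pm 1$ to $w$), and pairs of complex conjugate nodes (contributing $0$ in total). Computing $w$ therefore amounts to enumerating the double points as pairs $(t_1,t_2)\in\C P^1\times\C P^1$ of preimages of a common point, and assigning each such pair its correct type and sign.

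The main obstacle is this last sign bookkeeping, and my expectation is that the construction in Section \ref{kdw} has a modular structure adapted to it: one starts from a base knot realizing the extremal writhe $w=\pm\delta$, in which all $\delta$ nodes appear as real crossings of a single sign, and then performs a sequence of local real algebraic surgeries that each change $w$ by $\pm 2$. A typical such surgery replaces a pair of same-sign real crossings by a pair of complex conjugate nodes (reducing $|w|$ by $2$), or deforms a real crossing into a solitary double point of the opposite sign (likewise changing $w$ by $\pm 2$). Each of these surgeries is a local deformation whose effect on the writhe can be verified in a model example using the definition of encomplexed writhe, so the total writhe of $K_d^w$ is obtained by summing the contributions of the building blocks. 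The parity condition $w\equiv\delta\pmod{2}$ is then automatic, since every surgery changes $w$ by an even integer.
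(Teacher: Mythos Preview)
Your proposal misreads the nature of the construction in Section~\ref{kdw}. The knots $K_d^w$ are not given by explicit coordinate polynomials to be analyzed via resultants and discriminants; they are built geometrically by placing $d$ straight lines in $\R^3\subset\R P^3$ so that consecutive lines $L_n,L_{n+1}$ intersect transversally, and then applying the gluing operation of Theorem~\ref{combine} repeatedly. That theorem already guarantees both that the result is a smooth embedding and that its degree is the sum of the degrees of the pieces, namely $d$. So your items (i) and (ii) are immediate consequences of Theorem~\ref{combine}, not of a genericity analysis of coordinate polynomials, and the paper's proof of the theorem is accordingly one line: ``follows trivially from the construction.''

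For (iii), your outline (start from the extremal writhe and then perform local moves changing $w$ by $\pm 2$) matches the paper in spirit, but the mechanism you conjecture is not the one used. In the actual construction all $\frac{(d-1)(d-2)}{2}$ double points of the plane projection are ordinary real crossings coming from pairs of non-adjacent lines; there are no solitary double points and no complex-conjugate node pairs involved at any stage. The writhe is adjusted by changing the slope of one of the lines so that it passes through another line in $3$-space, flipping the over/under relation at a single real crossing and hence changing $w$ by exactly $2$. Your proposed surgeries (trading real crossings for complex-conjugate nodes, or converting a real crossing into a solitary point of opposite sign) would also change $w$ by $2$, but they are not what is done here, and invoking them would require a separate argument that such deformations can be realized by rational parametrizations of the \emph{same} degree $d$. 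The paper's line-based construction sidesteps this issue entirely: the writhe is read off directly from the signed crossings of the line diagram before resolution.
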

The proof of the Theorem and the construction of $K_d^w$ is presented in Section 4.\\

The results of this paper are related to recent work of Mikhalkin and Orevkov\cite{ORMI} where all real algebraic links of degree at most $6$ were classified up to smooth isotopy.

\section{Preliminaries}
In this section we introduce notation for projective spaces and discuss some elementary properties of the rational degree of knots in $\R P^3$. Furthermore, the generalization of knot diagrams for knots in $\R^3$ to knots in $\R P^3$ is introduced and a theorem used for gluing knots is presented.

\subsection{Projective spaces}
The space $\R P^n$ is the set of lines through the origin in $\R^{n+1}$. A non-zero vector $x=(x_0,...,x_n)$ in $\R^{n+1}$ defines a point $p$ in $\R P^n$ corresponding to the line generated by $x$. The homogenous coordinates of $p$ are $(x_0\,{:}\, ...\,{:}\, x_n)$. Accordingly, two points $p_1=(x_0\,{:}\, ...\,{:}\, x_n)$, $p_2=(y_0\,{:}\, ...\,{:}\, y_n)$ coincide if and only if there exists a real number $\lambda\neq0$ such that $x_i=\lambda y_i$ for $i=0,...,n$. Analogously we define homogenous coordinates $(x_0\,{:}\, ...\,{:}\, x_n)$ of $\C P^n$ where $x_i$ denotes complex numbers. Mapping $(x_0\,{:}\, ...\,{:}\, x_n)\in\R P^n$ to the point with the same coordinates in $\C P^n$ gives a natural inclusion $\R P^n\subset \C P^n$. We call points in $\R P^n\subset\C P^n$ {\em real points} and points in $\C P^n\backslash\R P^n$ {\em non-real points}. Complex conjugation of $\C P^n$ sends $p=(x_0\,{:}\, ...\,{:}\, x_n)$ to $\bar{p}=(\bar{x}_0\,{:}\, ...\,{:}\, \bar{x}_n)$ where $\bar{x}_j$ denotes the complex conjugate of the complex number $x_j$. Throughout this paper we will denote homogenous coordinates in the case $n=1$ by $(t\,{:}\,s)$.\\

We often represent $\R P^n$ as the unit sphere $S^n$ in $\R^{n+1}$ with antipodal points identified. In particular,  mapping the unit disk $D^n$ to the upper half-sphere gives a parametrization $D^n\rightarrow\R P^n$ defined by formula $$(x_1,...,x_n)\mapsto(\sqrt{1-x_1^2-x_2^2-...-x_n^2}\,{:}\, x_1\,{:}\, x_2\,{:}\, ...{:}\,x_n)$$ where antipodal points on the boundary are identified. The image of the boundary (defined by homogenous equation $x_0=0$) is called the {\em plane at infinity} and points in this plane are called {\em points at infinity}.

\subsection{Elementary properties of the rational degree}

If $P$ is a real polynomial map of degree $d$ from $\C P^1$ to $\C P^3$ parametrizing a knot $K$ we obtain new parametrization of the same degree $d$ by composing $P$ on the left by real linear transformations of $\C P^3$ and on the right by real linear transformations of $\C P^1$. Real linear transformations of $\C P^n$ form the Lie group $PGL_{n+1}(\R)$. Acting with elements from $PGL_2(\R)$ will result in new parametrizations of $K$ while the action by $PGL_{4}(\R)$ may result in a different image. This allows us to consider the space of real rational knot parametrizations of a given degree $d$ as a fiber bundle with fiber $PGL_2(\R)$ and the space of rational knots as the base space.\\

Let $P$ be a degree $d$ parametrization of a knot $K$. Then $d$ is even if and only if $K$ realizes zero in $H_1(\R P^3;\Z)$. Each knot in $\R^3$ can be considered as an affine knot in $\R P^3$ using the natural inclusion $\R^3\subset\R P^3$ onto the complement of a hyperplane. Every knot in $\R P^3$ smoothly isotopic to an affine knot must then have even rational degree. The inverse is however not true, an example of a knot in $\R P^3$ that realizes zero in $H_1(\R P^3;\Z)$ but is not isotopic to a knot from $\R^3$ is the twocrossing knot depicted in Fig. \ref{upto5} (as the knot with exactly two crossings).

By examining intersections with algebraic surfaces, we obtain some limitations on possible knots of rational degree $d$. For very low degrees this gives an immediate classification. It is well known that a curve in $\C P^3$ of degree $d$ either intersects a surface of degree $n$ in at most $nd$ points (counted with multiplicity) or is contained in the surface. A rational knot of degree $d\leq2$ is then planar since any three distinct points on the curve lie in a plane. Such a knot of degree $d\leq 2$ is smoothly isotopic to the straight line if and only if it is of rational degree $d=1$ and smoothly isotopic to the circle if and only if it is of rational degree $d=2$.
A similar result for $d\leq4$ is as follows.

\begin{lemma}
A knot admitting a parametrization of degree $\leq4$ lies on a quadric.
\end{lemma}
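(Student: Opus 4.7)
The plan is to prove this by a straightforward parameter count on the space of quadrics that contain the image of the parametrization. Let $P=(P_0:P_1:P_2:P_3):\C P^1\to\C P^3$ be a rational parametrization of $K$ of degree $d\leq 4$, so each $P_i$ is a real homogeneous polynomial in $(t:s)$ of degree $d$.

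First I would introduce the real vector space $\mathcal{Q}$ of homogeneous quadratic polynomials $Q(x_0,x_1,x_2,x_3)$ in the ambient coordinates; this space has real dimension $\binom{5}{2}=10$. Pulling back along $P$ gives a real linear map
\[
\Phi:\mathcal{Q}\longrightarrow \R[t,s]_{2d},\qquad Q\longmapsto Q(P_0,P_1,P_2,P_3),
\]
where the target, the space of real homogeneous polynomials in $(t,s)$ of degree $2d\leq 8$, has dimension $2d+1\leq 9$. The rank-nullity theorem then gives
\[
\dim\ker\Phi\ \geq\ 10-(2d+1)\ =\ 9-2d\ \geq\ 1.
\]

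Next I would pick any nonzero $Q\in\ker\Phi$. By construction $Q$ vanishes identically on $P(\C P^1)=\C K$, so the real quadric surface $S_Q=\{Q=0\}\subset\R P^3$ (and its complexification in $\C P^3$) contains $K=P(\R P^1)$, which is the conclusion of the lemma.

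There is no substantive obstacle in the argument; it is just a dimension count. The only subtlety worth flagging is that one must keep everything over $\R$ in order to obtain a \emph{real} quadric. This is automatic because $P$ has real coefficients, so $\Phi$ is defined over $\R$ and its kernel is a real subspace of $\mathcal{Q}$ from which a nonzero real $Q$ can be selected; this is what lets the conclusion be phrased inside the real algebraic setting of the paper.
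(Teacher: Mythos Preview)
Your proof is correct and is essentially the same dimension count as the paper's: ten coefficients of a quadric against at most nine linear conditions. The paper phrases it by picking nine points on the knot and then invoking B\'ezout to force the whole curve onto the quadric, whereas you pull back directly along the parametrization and so avoid that extra step; the underlying idea is identical.
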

\begin{proof}
Let $P$ be a rational parametrization with degree $d\leq4$. Choose nine distinct points $a_1,...,a_9$ contained in the knot. A surface of degree two is defined as the zero set of a homogenous polynomial $F$ of degree 2. Such a polynomial has ten coefficients. Since we have only nine points, a nontrivial collection of coefficients can be found such that all nine points are contained in the degree $2$ surface defined by the polynomial. This is only possible if the knot is contained in the surface.
\end{proof}

To get a limitation of the number of real crossings between a rational knot of degree $d$ and the plane at infinity  the following lemma is used.

\begin{lemma}\label{infcross}
Given a nonplanar knot $K$ parametrized by $P$ of degree $d>2$ there exists a rational parametrization $P'$ of degree $d$ parametrizing a knot $K'$ isotopic to $K$ such that $K'$ is transversal to the plane at infinity and intersects it in at most $d-2$ real points.
\end{lemma}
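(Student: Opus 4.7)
The strategy is to find a real hyperplane that meets $\C K$ at a pair of complex conjugate non-real points, then move this hyperplane to the plane at infinity by a real projective transformation; Bezout together with the parity of non-real intersections will force the real intersection count down to at most $d-2$.

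First I produce a pair of non-real points on $\C K$. Since $P:\C P^1\to\C P^3$ is generically injective and $\C P^1\setminus\R P^1$ is non-empty, there is a non-real $p\in\C K$ with conjugate $\bar p\in\C K$ and $\bar p\neq p$. A real hyperplane $H\subset\R P^3$ containing $p$ automatically contains $\bar p$, and $p$ imposes two real linear conditions on the defining form of $H$; hence the real hyperplanes through $\{p,\bar p\}$ form a pencil $\mathcal{P}\cong\R P^1$.

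Next I claim that a generic $H\in\mathcal{P}$ is transversal to $\C K$. Nonplanarity of $K$ together with $d>2$ ensures $\C K\not\subset H$, so $|H\cap\C K|=d$ counted with multiplicity by Bezout. Transversality away from the base locus $\{p,\bar p\}$ follows from Bertini's theorem applied to $\mathcal{P}$. At $p$, non-transversality means $H\supset T_p\C K$; hyperplanes through $p$ containing the tangent line $T_p\C K$ form a proper linear subfamily of the complexified pencil, cutting out only finitely many real members of $\mathcal{P}$, and the same holds at $\bar p$. Thus all but finitely many $H\in\mathcal{P}$ meet $\C K$ transversally in $d$ distinct points.

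For such an $H$, the $d$ distinct intersection points include the conjugate pair $\{p,\bar p\}$; since non-real intersection points come in complex conjugate pairs, at most $d-2$ of them lie in $\R P^3$. Finally, since the identity component $PGL_4(\R)^0$ acts transitively on the space of real hyperplanes, one can choose $A\in PGL_4(\R)^0$ carrying $H$ to the standard plane at infinity; a path from $I$ to $A$ in $PGL_4(\R)^0$ induces an ambient smooth isotopy from $K$ to $K'=A(K)$, and $P'=A\circ P$ is a real polynomial parametrization of $K'$ of the same degree $d$ meeting the plane at infinity transversally in at most $d-2$ real points. The main technical point is the Bertini/transversality step — specifically verifying that the pencil $\mathcal{P}$ is not swept out by the complex tangencies at $p$ and $\bar p$ — but this reduces to a dimension count in the dual projective space.
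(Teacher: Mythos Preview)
Your proof is correct and follows essentially the same strategy as the paper's: locate a real hyperplane $H$ transversal to $\C K$ that meets $\C K$ in a pair of complex conjugate non-real points, invoke B\'ezout to bound the real intersections by $d-2$, and then carry $H$ to the plane at infinity by a real projective transformation in the identity component of $PGL_4(\R)$. The paper simply asserts the existence of such an $H$ in one sentence, whereas you supply a construction (fix a non-real $p\in\C K$, form the real pencil through $\{p,\bar p\}$, and argue transversality for a generic member via a Bertini-type count); your version is therefore more carefully justified, though the invocation of Bertini is heavier than strictly necessary for this elementary existence statement.
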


\begin{proof}
Take a real plane $H$ transversal to the knot $K$ intersecting its complexification $\C K$ in a nonreal point $p$. Since both the knot and the plane are defined by real equations they will also intersect each other at the complex conjugate point $\bar p\neq p$. The knot must intersect $H$ in exactly $d$ points and so at most $d-2$ of them can be real. A real linear transformation of $\C P^3$ that preserves orientation takes $H$ to the plane at infinity.\\
\end{proof}


\subsection{Knot diagrams in $\R P^3$}\label{knotdiagrams}

To study knots in $\R^3$, knot diagrams are often used to reduce a problem for curves in space to combinatorial manipulations of diagrams in a plane as follows. A given knot $K$ is projected to a generic plane $H$. In appropriate coordinates, the plane $H$ will be defined by $x_3=0$, and projecting to the plane will then correspond to disregarding the height-function $x_3$. Furthermore, since $H$ is generic, the projection is an immersion and any self intersection is a transversal double point. A {\em knot diagram of K} is the image of such a projection together with extra information at the double points: at each double point it is indicated which preimage is above the other, see Fig. \ref{cross}. The {\em diagram of $K$} is the image of the projection together with the extra information at the double points. Two knots are smoothly isotopic if and only if the diagram of one can be deformed into the diagram of the other by smooth planar isotopy and a series of moves called Reidemeister moves. These are depicted as $\Omega_1,\Omega_2$ and $\Omega_3$ in Fig. \ref{reidemeister}.\\

\begin{figure}
\includegraphics[scale=0.5]{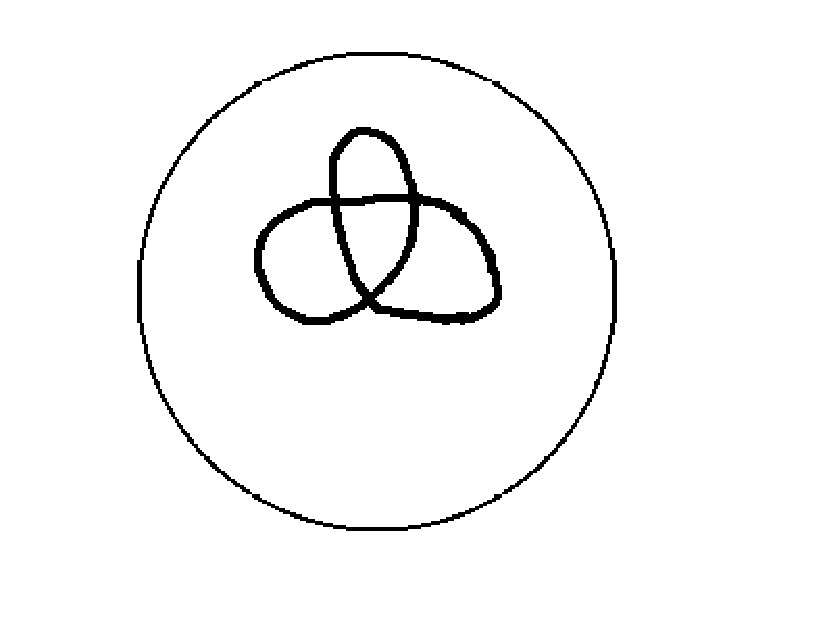}
\includegraphics[scale=0.5]{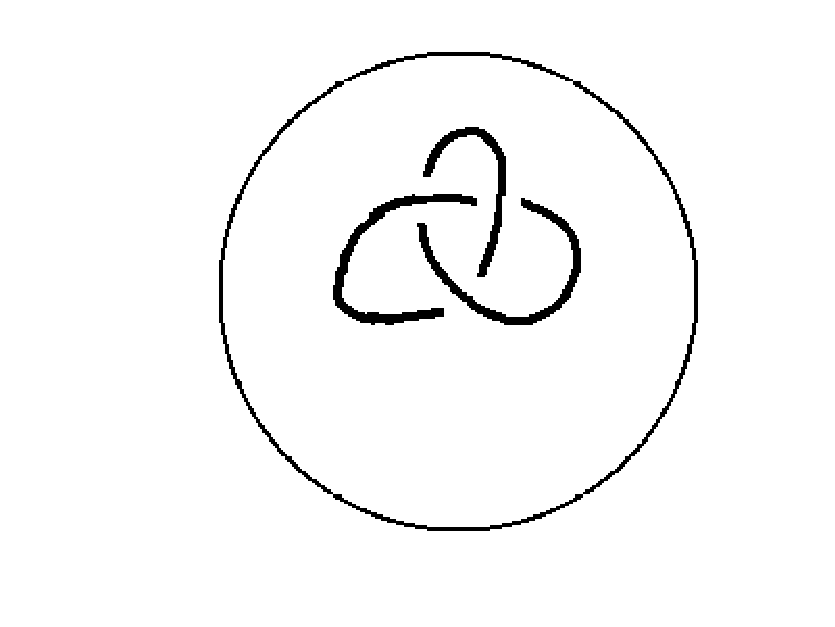}
\caption{}
\label{cross}
\end{figure}

The extension of these notions and of classical knot invariants to knots in$\R P^3$ was studied by Drobotukhina [1][2]:instead of projecting the knots to an affine plane they will be projected to a projective plane $\approx\R P^2\subset\R P^3$ in analogy with the $\R^3$-case. If $P:\R P^1\rightarrow \R P^3, P(s)=[x_0(s){:}x_1(s){:}x_2(s){:}x_3(s)]$ is a parametrization such that $[0{:}0{:}0{:}1]$ is not contained in the image, then projecting to a plane will correspond to ignoring the height function $x_3(s)$ and the image under the projection will be an immersion with any self intersections consisting of transversal double points. To draw diagrams we will use the representation of $\R P^2$ as $D^2$, see Section 2.1. Isotopies can again be described in terms of moves on diagrams. To deal with knots moving through the plane at infinity two additional moves are needed. See Fig. \ref{reidemeister} for a complete list of moves for diagrams of knots in $\R P^3$.

\begin{figure}
\includegraphics[scale=1.0]{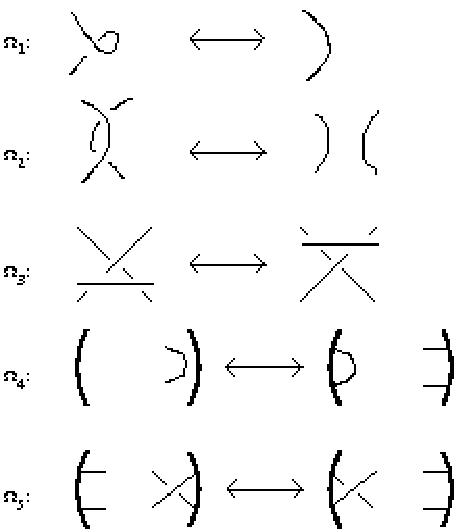}
\caption{}
\label{reidemeister}
\end{figure}

\begin{remark}
In knot theory in $\R^3$, the unknot plays a special role, being the only knot which has a corresponding diagram that can be transformed to a diagram without double points and so can be embedded in a plane. That role is played by two knots in $\R P^3$, the projective line and the standard circle which are the only two planar knots.
\end{remark}

\subsection{A gluing operation for rational knots}
The following theorem allows us to glue knots together while keeping track of the degree.

\begin{theorem} \label{combine}
If two knots admitting parametrizations of degree $m$ and $n$ intersect in only one point $p$ with linearly independent tangent lines at $p$, then it is possible to construct a knot parametrization $P$ with the following properties: $P$ is of degree $m+n$ and its image is isotopic to the union of the images of the two earlier parametrizations with a small perturbation at the intersection point $p$ as shown in Fig. \ref{perturb}.
\end{theorem}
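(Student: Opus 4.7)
The plan is to realize $P$ as a real one-parameter smoothing of the reducible nodal curve $K_1\cup K_2$. Since $p$ is a transverse node on $K_1\cup K_2$, the union has total degree $m+n$ and arithmetic genus zero, so a deformation which resolves the node should produce a smooth rational curve of degree $m+n$ isotopic to $K_1\cup K_2$ with a local perturbation at $p$. The strategy is to make this deformation completely explicit.

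After a real projective change of coordinates on $\R P^3$ place $p$ at $[1\,{:}\,0\,{:}\,0\,{:}\,0]$, and after real M\"obius reparametrizations of the two domains arrange that $P_1(0\,{:}\,1)=P_2(0\,{:}\,1)=p$ while neither $P_i(1\,{:}\,0)$ lies in the hyperplane $\{x_0=0\}$. Write $P_1=[F_0\,{:}\,F_1\,{:}\,F_2\,{:}\,F_3]$ with $F_j$ homogeneous of degree $m$ and $P_2=[G_0\,{:}\,G_1\,{:}\,G_2\,{:}\,G_3]$ of degree $n$, normalized so that $F_0(0,1)=G_0(0,1)=1$ and $F_j(0,1)=G_j(0,1)=0$ for $j\geq 1$. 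Introduce the bihomogeneous blueprint $\Phi:\R P^1_u\times\R P^1_v\to\R P^3$ of bidegree $(n,m)$ defined coordinate-wise by
\[
\Phi(u,v)=\bigl[F_0(v)G_0(u)\,{:}\,F_1(v)G_0(u)+F_0(v)G_1(u)\,{:}\,\cdots\,{:}\,F_3(v)G_0(u)+F_0(v)G_3(u)\bigr].
\]
By the normalization $\Phi(0,v)=P_1(v)$ and $\Phi(u,0)=P_2(u)$, so $\Phi$ carries the nodal domain $\{uv=0\}\subset\R P^1\times\R P^1$ onto $K_1\cup K_2$, with the node $(0,0)$ going to $p$. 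Smoothing the domain to the hyperbola $\{uv=\epsilon\}\cong\R P^1$ via $t\mapsto(t,\epsilon/t)$ and clearing the resulting factor of $t^m$ yields a real polynomial map $P_\epsilon:\R P^1\to\R P^3$. Its degree is exactly $m+n$, because the leading $t$-coefficient of its first coordinate is $F_0(0,1)$ times the leading coefficient of $G_0$, which is nonzero by the choice of $P_2(1\,{:}\,0)$; one may also see this intersection-theoretically, as the pairing $(n,m)\cdot(1,1)=m+n$ on $\R P^1\times\R P^1$.

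For small $|\epsilon|\neq 0$, $P_\epsilon$ displays three regimes in $t$: on $|t|\gg\sqrt{|\epsilon|}$ one has $v\to 0$ and $P_\epsilon(t)\approx P_2(t)$; on $|t|\ll\sqrt{|\epsilon|}$ one has $u\to 0$ and $P_\epsilon(t)\approx P_1(\epsilon/t)$; and on $|t|\sim\sqrt{|\epsilon|}$ both $u,v$ are close to zero and the leading part of $\Phi$ is the affine model $(u,v)\mapsto p+u\,v_2+v\,v_1$, where $v_1,v_2$ are the (linearly independent) tangents of $K_1,K_2$ at $p$. Restricting this affine model to $uv=\epsilon$ gives a hyperbola in the plane $\mathrm{span}(v_1,v_2)$ whose two real branches are precisely the two standard real smoothings of a transverse crossing; the sign of $\epsilon$ picks out the one drawn in Figure \ref{perturb}.

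The main obstacle is to verify that, for all sufficiently small $|\epsilon|\neq 0$, $P_\epsilon$ is a smooth embedding of $\C P^1$ into $\C P^3$. Outside a fixed neighborhood $U$ of $p$, $P_\epsilon$ depends continuously on $\epsilon$ and limits to the disjoint union $P_1\sqcup P_2$; since $K_1$ and $K_2$ are each embedded and meet only at $p$, embeddedness there for small $|\epsilon|$ follows from openness of the space of rational knot parametrizations. Inside $U$ the local model above is an embedded smooth conic, and it is an immersion by linear independence of $v_1,v_2$. Finally, a double point joining a pre-image in $U$ to one outside $U$ is ruled out for $|\epsilon|$ small, since pre-images in $U$ map into a shrinking neighborhood of $p$ while pre-images outside $U$ stay a fixed positive distance from $p$. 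Combining these three facts and using compactness of $\R P^1$ gives that $P_\epsilon$ is an embedding for every small $\epsilon$ of the appropriate sign, completing the construction.
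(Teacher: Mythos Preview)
Your construction and the paper's are the same formula viewed from two angles. In the affine chart $x_0\neq 0$ your map $\Phi$ reads coordinate-wise as $F_j/F_0+G_j/G_0$, i.e.\ the sum of the two affine parametrizations; the paper writes exactly this as $\hat R=R+R'$ (after placing $p$ at the origin and arranging the two preimages of $p$ at opposite ends of $\C P^1$) and then argues directly that $\hat R$ is an embedding by rescaling the domain so that, for any small ball $B$ around $p$, one summand lies in $B$ while the other traces its knot outside $B$. Your packaging via $\Phi$ on $\R P^1\times\R P^1$ and restriction to the hyperbola $uv=\epsilon$ introduces a deformation parameter and argues in the limit $\epsilon\to 0$; this gives a cleaner degree count (as the intersection number $(n,m)\cdot(1,1)$) and explains transparently why both local smoothings of the node are available via the sign of $\epsilon$, whereas the paper's version is more direct but leaves the choice of smoothing implicit. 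Your three-regime analysis is exactly the paper's ``near $(0{:}1)$ / near $(1{:}0)$ / in between'' trichotomy.
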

\begin{proof}
Let $P=(p_0{:}p_1{:}p_2{:}p_3)$ and $P'=(p_0'{:}p_1'{:}p_2'{:}p_3')$ be parametrizations of knots $K$ and $K'$ of degrees $m$ and $n$ respectively, intersecting in a point $p$ as above. We may assume that $p=(1{:}0{:}0{:}0)$ and that $P(1{:}0)=P'(0{:}1)=p$. Consider the complexifications $\C K,\C K'$ and take a small closed ball $B$ around $p\in \C P^3$ and consider its complement $C$. We can without loss of generality assume that $P^{-1}(C)$ is an open set contained in an arbitrarily small $\epsilon-$ball around $(0{:}1)$ by acting on $\C P^1$ with a linear transformation sending $(s{:}t)$ to $(s{:}rt)$ for some suitably large $r\in\R_+$. Analogously $P'^{-1}(C)$ can be assumed to be contained in an arbitrarily small $\epsilon-$ball around $(1{:}0)$. Let $A$ be the union of the preimages of the plane at infinity under the two parametrizations $P,P'$. We construct rational functions $R,R'$ from $\C P^1\backslash A$ to $\C^3\subset \C P^3$ as follows: $$R(s{:}t)=(r_1(s{:}t),r_2(s{:}t),r_3(s{:}t))=\left(\frac{p_1(s{:}t)}{p_0(s{:}t)},\frac{p_2(s{:}t)}{p_0(s{:}t)},\frac{p_3(s{:}t)}{p_0(s{:}t)}\right)$$ $$R'(s{:}t)=(r_1'(s{:}t),r_2'(s{:}t),r_3'(s{:}t))=\left(\frac{p_1'(s{:}t)}{p_0'(s{:}t)},\frac{p_2'(s{:}t)}{p_0'(s{:}t)},\frac{p_3'(s{:}t)}{p_0'(s{:}t)}\right).$$ We construct a new rational function $$\hat{R}(s{:}t)=R(s{:}t)+R'(s{:}t).$$ The function $\hat{R}$ will then be a rational function of degree $m+n$.\\
For points close to $(0{:}1)$ we have $\hat{R}=R+v$ where $v$ is a function taking values in $B$. It follows that the image will agree with the image of a section of a tubular neighborhood of $K\backslash B$. Similarly, for points close to $(1{:}0)$ the image will agree with the image of a section of a tubular neighborhood of $K'\backslash B$. All other points will map into a small ball around $(1{:}0{:}0{:}0)$ (being the sum of two vectors in $B\subset\C^3\subset \C P^3$) and it follows by inspection that the map gives us a resolution of the intersection as shown in Fig. \ref{perturb}. We extend $\hat{R}$ to a rational knot parametrization $\hat{P}: \C P^1\to \C P^3$ of degree $m+n$ by the natural inclusion $\C^3\subset\C P^3$ and by continuity at points contained in $A$. The resulting knot parametrization will then satisfy the properties of the theorem. \end{proof}

\begin{figure}
\includegraphics[scale=0.5]{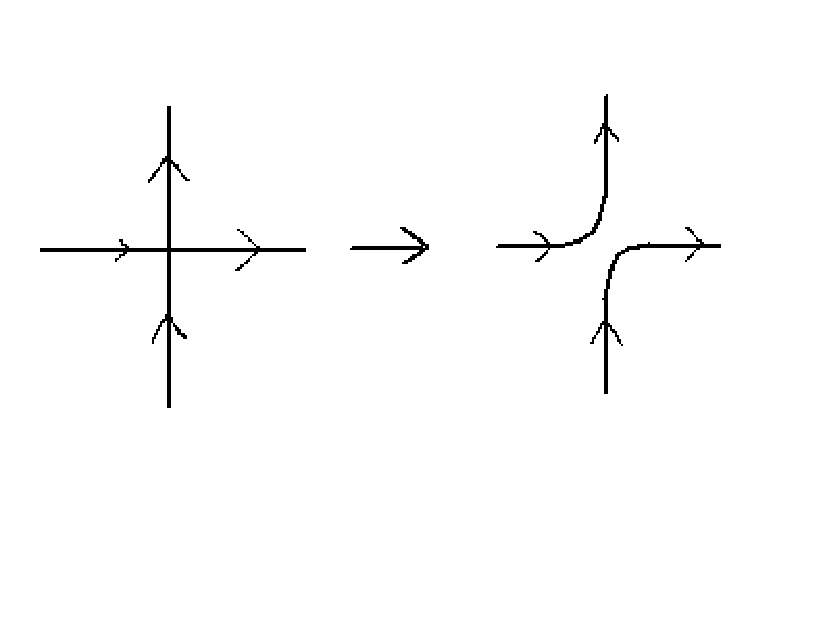}
\caption{}
\label{perturb}
\end{figure}

\section{Classification of knots of low degree}

In this section the proofs for the main results of this paper are presented.

\subsection{Rigid isotopy classification for degrees 1-4}

Recall the real algebraic knot invariant writhe $w$ mentioned in Section 1. We will use the following results from \cite{VIRO}.

\begin{lemma}
For an algebraic knot $K$ of degree $d$, $$\frac{(d-1)(d-2)}{2}=w(K) \mod{2}.$$
\end{lemma}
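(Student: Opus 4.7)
The plan is to deduce the congruence from the genus formula applied to a generic plane projection. Let $K$ be a rational knot of degree $d$ in $\R P^3$ with complexification $\C K\subset\C P^3$. Choose a real generic point of projection so that the image of $\C K$ in $\C P^2$ is a rational plane curve of degree $d$ whose only singularities are ordinary (transversal) double points; by the rationality condition together with the genus formula, the number of such nodes is exactly
\[
\delta=\frac{(d-1)(d-2)}{2}.
\]
This is the only ``global'' input, and the bulk of the work is to argue that a real projection center can be chosen generically so as to realise this count.

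Next I would classify the $\delta$ nodes by their behaviour under complex conjugation. Every node $q$ is either (i) a real point whose two local branches are real (an honest real crossing), (ii) a real point whose two local branches are complex conjugate (a solitary double point), or (iii) a non-real point, in which case the conjugate $\bar q$ is also a node of the same local type. If $r_1$, $r_2$, and $2r_3$ denote the numbers of nodes of types (i), (ii), and (iii) respectively, then
\[
r_1+r_2+2r_3=\delta.
\]

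Now I invoke the definition of the encomplexed writhe recalled in the introduction: $w(K)$ is a sum of signs $\pm 1$, one over each double point of types (i) and (ii) in a generic plane projection, and such projections compute the same $w$ (a fact of Viro's that may be assumed here). Thus
\[
w(K)=\sum_{\text{type (i),(ii) nodes}}\!\!\pm 1,\qquad\text{so}\qquad w(K)\equiv r_1+r_2\pmod{2}.
\]
Combining this with $r_1+r_2=\delta-2r_3$ gives $w(K)\equiv\delta=\frac{(d-1)(d-2)}{2}\pmod{2}$, as claimed.

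The main obstacle is the first step: verifying that a generic real plane projection of $\C K$ is an immersion with exactly $\delta$ ordinary nodes and no tangencies, flexes, or higher-order singularities, so that the genus-formula count applies. This is a standard genericity argument (the bad projection centres form a proper real algebraic subvariety of $\R P^3$, hence a non-dense subset of real points), but it is the only place where one has to work; the rest of the argument is pure bookkeeping of real versus complex contributions modulo $2$.
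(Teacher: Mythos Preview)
Your argument is correct and follows essentially the same route as the paper's proof: project generically, count the $\frac{(d-1)(d-2)}{2}$ nodes, observe that non-real nodes occur in conjugate pairs while each real or solitary node contributes $\pm1$ to $w$, and read off the parity. You are simply more explicit than the paper about invoking the genus formula and about the genericity of the projection.
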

\begin{proof}
After projection to a generic plane an algebraic knot $K$ of degree $d$ has $\frac{(d-1)(d-2)}{2}$ double points when counting both real, solitary and nonreal double points. Each real or solitary double point either increases or decreases writhe by one and each nonreal double point $p$ comes with a conjugate point $\bar{p}$ (neither which is assigned a number since they are nonreal) so $\frac{(d-1)(d-2)}{2}=w(K) \mod{2}$.
\end{proof}
\begin{lemma}
For an algebraic knot $K$ of degree $d$, $$-\frac{(d-1)(d-2)}{2}\leq w(K)\leq\frac{(d-1)(d-2)}{2}.$$
\end{lemma}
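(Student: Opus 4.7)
The bound is essentially a counting statement once one unpacks the definition of $w$. Here is how I would proceed.

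The plan is to use a generic plane projection and simply count contributions. By the previous lemma (and its proof), after projection to a generic plane a rational algebraic knot of degree $d$ has exactly $\frac{(d-1)(d-2)}{2}$ double points in total, where by ``double point'' one counts \emph{real} nodes, \emph{solitary} nodes (two complex conjugate smooth branches meeting at a real point), and pairs of conjugate \emph{non-real} nodes. This count is classical: it is the arithmetic genus formula applied to an irreducible rational curve of degree $d$, whose geometric genus is $0$.

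Next I would recall from Section~1 (or from \cite{VIRO}) that the encomplexed writhe $w(K)$ is defined as a sum of signs $\pm 1$ ranging over the real and solitary double points of a generic projection, while each pair of non-real conjugate double points contributes nothing. Let $r$ denote the number of real nodes, $s$ the number of solitary nodes, and $2k$ the number of non-real nodes (grouped in conjugate pairs). Then
\[
r+s+2k=\frac{(d-1)(d-2)}{2},
\]
and the definition of the writhe gives
\[
w(K)=\sum_{\text{real and solitary nodes}} \pm 1,
\]
so that by the triangle inequality
\[
|w(K)|\leq r+s \leq r+s+2k=\frac{(d-1)(d-2)}{2}.
\]

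That essentially finishes the proof. The only subtlety (which is not really an obstacle, just a point to acknowledge) is that $w(K)$ must be independent of the choice of generic plane projection used to compute it; this is part of the content of the encomplexed writhe being a genuine invariant and is established in \cite{VIRO}, which we are entitled to cite. Once that is granted, no choice is being made and the inequality is immediate from the node count plus the triangle inequality. I do not expect any real technical obstacle here.
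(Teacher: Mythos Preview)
Your argument is correct and essentially the same as the paper's: project to a generic plane, use the genus formula to count $\frac{(d-1)(d-2)}{2}$ double points, and bound the writhe by noting each double point contributes between $-1$ and $1$. Your version is slightly more explicit in separating the real, solitary, and non-real nodes, but the underlying idea is identical.
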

\begin{proof}
After projection to a generic plane an algebraic knot $K$ of degree $d$ has $\frac{(d-1)(d-2)}{2}$ double points when counting both real and complex double points. Each double point contributes at most $1$ and at least $-1$ to the writhe.
\end{proof}
\begin{remark}
The limits in these inequalities are attained, see subsection \ref{kdw}.
\end{remark}

We describe two simple tools for examining $\mathcal{K}_d$. We can act on $\mathcal{K}_d$, the space of rational knot parametrizations of degree $d$ with $PSL_4(\R)$ in a natural way. Since this Lie group is connected, such action moves any point only within its connected component. This allows us to perform a form of Gauss elimination on the coefficients of parametrizing polynomials of a real algebraic knot by rigid isotopy. We especially note that if we rotate in an appropriate plane we can send $(a{:}b{:}c{:}d)$ to $(b{:}-a{:}c{:}d)$ or $(-a{:}-b{:}c{:}d)$. This allows us to move signs between the coordinates, and to permute the polynomials of parametrizations (up to some signs).\\
Our method for examining rigid isotopy for knots of higher degrees than $3$ is to consider the space $\mathcal{C}_d$ of rational curve parametrizations of degree $d$. This space is connected and contains the space $\mathcal{K}_d$  of rational knot parametrizations of degree $d$. The complement of $\mathcal{K}_d$ in $\mathcal{C}_d$ is the discriminant $\Sigma$ which is a stratified space $$\Sigma=\Sigma^1\supset\Sigma^2\supset\cdots\supset\Sigma^k.$$
To study the components of $\mathcal{K}_d$ we examine the components of the highest strata $\Sigma^1\setminus\Sigma^2$of codimension $1$. We call the components of this strata {\em walls}. The walls consists of curves with a single self intersection with linearly independent tangent vectors at the intersection point. A generic path connecting two knots in $\mathcal{C}_d$ will only intersect the discriminant in walls and at each such intersection it will be transversal to the wall.

\subsection{Lemmas used in the proof of Theorem \ref{rigidclass}}
In this subsection lemmas used for the proof of those parts of Theorem \ref{rigidclass} which pertain to knots of degree $d\leq4$ are stated.

\begin{lemma}
 There are the following 4 rigid isotopy classes of rational knots of degree $d\leq3$.
\begin{enumerate}
\item For $d=1$ there is only one rigid isotopy class:
\begin{itemize}
 \item the line with $w=0$.
\end{itemize}
\item For $d=2$ there is only one rigid isotopy class:
\begin{itemize}
 \item the circle with $w=0$.
\end{itemize}

\item For $d=3$ there are two rigid isotopy classes:
\begin{itemize}
 \item a line with $w=1$ and
 \item a line with $w=-1$.
\end{itemize}
\end{enumerate}
\end{lemma}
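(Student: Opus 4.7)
The plan is to handle each degree $d\in\{1,2,3\}$ separately. The formulas $w\equiv (d-1)(d-2)/2 \pmod{2}$ and $|w|\leq (d-1)(d-2)/2$ from the preceding lemmas force $w=0$ for $d\leq 2$ and $w=\pm 1$ for $d=3$, so the claim gives at least one class for $d\in\{1,2\}$ and at least two (distinguished by writhe) for $d=3$. The content of the lemma is the matching upper bounds.

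For $d=1$, a parametrization is encoded by a $2\times 4$ real matrix of coefficients, and the embedding condition is that this matrix have rank $2$. The rank-$\leq 1$ locus is cut out by the $2\times 2$ minors and, via the Segre picture, has real codimension $3$ in $\R P^7$; its complement is therefore path-connected, so $\mathcal{K}_1$ has one component. For $d=2$, by the observation made earlier in the paper that any three points on a degree-$2$ curve are coplanar, the image lies in a plane $H$ and is a smooth conic there. The $PGL_4(\R)$-action lets me normalize both $H$ and the conic, and the seemingly dangerous $PGL_2(\R)$-ambiguity of reparametrization is resolved by noting that the reversing reparametrization of a planar conic can be realized by a continuous $180^\circ$ rotation of $\R P^3$ about a line lying in $H$. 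Hence $\mathcal{K}_2$ is connected.

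For $d=3$, the crucial step is to identify $\mathcal{K}_3$ explicitly. I claim every cubic knot parametrization has linearly independent coordinate polynomials $p_0,p_1,p_2,p_3$: if they were linearly dependent, the complex image $P(\C P^1)$ would lie in a plane of $\C P^3$ and would be a smooth plane cubic parametrized by $\C P^1$, but the genus-degree formula forces a smooth plane cubic to have genus $1$, contradicting its rationality. Thus $(p_0,p_1,p_2,p_3)$ is a basis of the $4$-dimensional space of real cubic forms in $(t,s)$, and the parametrization has the form $A\circ P_0$ for a unique $A\in PGL_4(\R)$, where $P_0(s\,{:}\,t)=(s^3\,{:}\,s^2 t\,{:}\,s t^2\,{:}\,t^3)$ is the standard twisted cubic. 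Conversely, every $A\circ P_0$ is a smooth embedding since $A$ is a bijection of $\C P^3$. So $\mathcal{K}_3\cong PGL_4(\R)$, which has exactly two connected components (distinguished by $\sgn\det$). Rigid isotopy classes of knots are then obtained by further quotienting by the $PGL_2(\R)$ reparametrization action, but this action corresponds on the $PGL_4(\R)$ side to right multiplication by transformations $A_\phi$ with $\det A_\phi = (\det\phi)^6 > 0$, so it preserves each component. Thus $d=3$ gives exactly two rigid isotopy classes, corresponding to writhe $\pm 1$.

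The main obstacle I anticipate is the $d=3$ case, specifically the identification $\mathcal{K}_3\cong PGL_4(\R)$. The key subtleties are the non-planarity argument via the genus-degree formula and the verification that the $PGL_2(\R)$ reparametrization action lands in the identity component of $PGL_4(\R)$; the $d\leq 2$ cases are routine once one observes that orientation-reversing reparametrizations of lower-dimensional embedded curves can always be realized by ambient rotations in $\R P^3$.
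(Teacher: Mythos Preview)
Your proposal is correct and follows essentially the same route as the paper. In particular, for the key case $d=3$ both you and the paper identify $\mathcal{K}_3$ with $PGL_4(\R)$ via the $4\times 4$ coefficient matrix, and both rule out singular matrices by observing that a planar rational cubic cannot be embedded (you phrase this via the genus--degree formula, the paper via the node/cusp count; these are the same fact). Your additional check that the $PGL_2(\R)$ reparametrization action lands in the identity component of $PGL_4(\R)$, and your more explicit codimension/normalization arguments for $d\le 2$, are careful elaborations rather than a different strategy.
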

\begin{proof}
We examine the degrees separately.
\begin{itemize}
\item Knots of degree 1 are lines and so obviously rigidly isotopic.

\item Knots of degree two are planar and must be a second degree curve in this plane. Only one such curve exists after a (linear) reparametrization, namely a standard circle.

\item Consider the space of curve parametrizations and a knot parametrization $P$. Each parametrization is given by a $4\times4$ matrix, with entry $a_{ij}$ corresponding to the coefficient of the $t^{j-1}s^{4-j}$ term of the parametrizing polynomial $p_{i-1}$. If this matrix is not invertible, the knot must be planar. But planar curves of degree $3$ has one self intersection, and so cannot be knots. If the matrix is invertible then $P$ will truly be a knot parametrization so $\mathcal{K}_3$  is homeomorphic to $PGL_4(\R)$. This space has two connected components, we can choose representatives $P(t{:}s)=(t^3{:}st^2{:}s^2t{:}\pm s^3)$ from them having encomplexed writhe $w=\pm1$. However, the knots are smoothly isotopic to each other, and to a straight line.

\end{itemize}
\end{proof}

Degree $4$ requires examining the walls separating components of $\mathcal{K}_4$, the following lemma gives a bound on the number of walls.

\begin{lemma}
The highest strata of the discriminant for degree $d=4$, $\Sigma_4^1\slash\Sigma_4^2$ consists of at most three components.
\end{lemma}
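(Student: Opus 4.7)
The plan is to put each wall curve into an explicit normal form via the $PGL_2(\R)\times PGL_4(\R)$ action on $\mathcal{C}_4$ and then enumerate the discrete invariants surviving the residual symmetry. I will use $PGL_2(\R)$ to send the two real preimages of the node to $(0{:}1)$ and $(1{:}0)$ in $\R P^1$ (a $2$-dimensional slice of the $3$-dimensional group), and $PGL_4(\R)$ to send the node itself to $(1{:}0{:}0{:}0)\in\R P^3$ (a $3$-dimensional slice) and the two linearly independent tangent directions at the node to the $x_1$- and $x_2$-coordinate axes (a further $4$-dimensional slice of the stabilizer of $(1{:}0{:}0{:}0)$). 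The $18$-dimensional wall in $\mathcal{C}_4=\R P^{19}$ has the same dimension as $PGL_2(\R)\times PGL_4(\R)$, so the orbit space is zero-dimensional and I need only bound its number of points.

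After normalization the parametrization takes the explicit form
\begin{align*}
P_0 &= t^4+\alpha_1 st^3+\alpha_2 s^2t^2+\alpha_3 s^3t+\alpha_4 s^4,\\
P_1 &= \beta_{11}st^3+\beta_{12}s^2t^2,\\
P_2 &= \beta_{22}s^2t^2+\beta_{23}s^3t,\\
P_3 &= \beta_{32}s^2t^2,
\end{align*}
with $\alpha_4,\beta_{11},\beta_{23},\beta_{32}$ all nonzero on $\Sigma_4^1\setminus\Sigma_4^2$: $\alpha_4=0$ would make $P$ undefined at $(1{:}0)$, vanishing of $\beta_{11}$ or $\beta_{23}$ would force the two tangent directions to coincide, and $\beta_{32}=0$ would force the image into a plane, pushing $P$ into deeper strata (a planar rational quartic requires three nodes, not one). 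The residual $9$-dimensional symmetry is the subgroup $(s{:}t)\mapsto(\lambda s{:}t)$ of $PGL_2(\R)$ together with the upper-triangular subgroup of $PGL_4(\R)$ stabilizing $(1{:}0{:}0{:}0)$, the $x_1$-axis, and the $x_2$-axis.

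A direct computation of the residual action shows that the off-diagonal entries $m_{12},m_{13},m_{14},m_{24},m_{34}$ absorb $\alpha_1,\alpha_3,\alpha_2,\beta_{12},\beta_{22}$ respectively (the linear equations have unique solutions because $\beta_{11},\beta_{23},\beta_{32}\neq 0$), and the diagonal entries $m_{22},m_{33},m_{44}$ together with $|\lambda|$ normalize $|\alpha_4|,|\beta_{11}|,|\beta_{23}|,|\beta_{32}|$ to $1$. Under the $\lambda$-action $\alpha_4\mapsto\lambda^4\alpha_4$ while the upper-triangular part fixes $\alpha_4$ (since $P_1,P_2,P_3$ have no $s^4$-term), so $\sgn(\alpha_4)$ is a genuine invariant, whereas the three sign choices on $\beta_{11},\beta_{23},\beta_{32}$ can be flipped independently by the signs of $m_{22},m_{33},m_{44}$. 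Thus the generic locus produces at most two orbits indexed by $\sgn(\alpha_4)$, and allowing at most one further component from the non-generic stratum (where the normalization degenerates, for example when the unique quadric through the image guaranteed by the earlier lemma becomes singular) yields the stated upper bound of three. The main obstacle will be this final bookkeeping step: checking that the sign residues interact exactly as claimed and that the non-generic stratum contributes at most one extra component rather than several, the remainder belonging to $\Sigma_4^2$.
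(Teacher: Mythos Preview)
There are two genuine gaps.

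\textbf{First, you never treat solitary double points.} A curve in $\Sigma_4^1\setminus\Sigma_4^2$ may have its unique node arising from a pair of complex conjugate non-real preimages in $\C P^1$. This is an open condition on the wall, not a degeneration of your normal form, and your very first step---using $PGL_2(\R)$ to send the preimages to $(0{:}1)$ and $(1{:}0)$---is simply unavailable there. The paper handles this as a separate case and shows it yields one wall, which is then argued to be connected to one of the real-node walls. Your ``non-generic stratum'' remark about singular quadrics is not this case and is in fact vacuous: your normalization works on all of the real-node locus, since linear independence of the tangents and nonvanishing of the $\beta$'s are exactly the conditions defining $\Sigma_4^1\setminus\Sigma_4^2$.

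\textbf{Second, orbit count is not a component count.} You normalize using the full groups $PGL_2(\R)\times PGL_4(\R)$, but each factor has two connected components (for $n$ even the sign of the determinant descends to $PGL_n(\R)$). In particular, flipping the sign of a single $m_{ii}$ lies in the non-identity component of $PGL_4(\R)$, so your absorption of $\sgn(\beta_{11}),\sgn(\beta_{23}),\sgn(\beta_{32})$ does not stay inside a connected component of the wall. Concretely, the paper's real-node analysis produces three walls, two of which (its $P_2$ and $P_4$) both land in your single class $\sgn(\alpha_4)=-1$: one can pass from $P_4$ to $P_2$ by the swap $(s{:}t)\mapsto(t{:}s)$ composed with $x_0\leftrightarrow x_2$, each of which has determinant $-1$. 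These two walls are genuinely distinct (they separate knots with writhes $\{3,1\}$ versus $\{-3,-1\}$), so your bound of $2$ for the real case is wrong. The paper avoids this by working with the connected group $PSL_4(\R)$ throughout and tracking the signs that survive.
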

\begin{proof}
We examine the elements of $\mathcal{C}_4$ that parametrizes curves with only one self intersection. We note that if a curve would have more than one self intersection, it would be planar since a plane containing these two points of self intersection and a third point from the knot would intersect it $5$ times (counted with multiplicity).
A self intersection can occur in two ways, either from two real points or from two (conjugate) nonreal points. After a linear transformation, we can assume that the point of self intersection is $p=(0{:}0{:}0{:}1)$. We now have two cases to examine.
\begin{enumerate}
 \item One real self intersection
 \item One solitary self intersection arising from two nonreal points
\end{enumerate}

We begin with the first case, (1).
Let the doublepoint occur at the image of $(0{:}1)$ and $(1{:}0)$ and let the image be $p=(0{:}0{:}0{:}1)$. The parametrization looks as follows:
$P(t{:}s)=(ts\cdot p_1(t,s){:}ts\cdot p_2(t,s){:}ts\cdot p_3(t,s){:}p_4(t,s))$ where $p_i$ is a polynomial of degree 2 for $i=1..3$. Since the knot is nonplanar the first three polynomials must be linearly independent. We can use Gauss elimination to put the parametrization on a standard form $P(t{:}s)=(ts^3{:}t^2s^2{:}t^3s{:}p_4(t,s))$ (any signs are moved to $p_4$). We can again apply an element from $PSL_4(\R)$ to remove all terms except $t^4, s^4$ from $p_4$ (both terms must exist with nonzero coefficients since not all four polynomials can have a common factor). Normalizing leaves us with the following four possibilities:
$$P_1(t{:}s)=(ts^3{:}t^2s^2{:}t^3s{:}t^4+s^4)$$
$$P_2(t{:}s)=(ts^3{:}t^2s^2{:}t^3s{:}t^4-s^4)$$
$$P_3(t{:}s)=(ts^3{:}t^2s^2,t^3s{:}-t^4-s^4)$$
$$P_4(t{:}s)=(ts^3{:}t^2s^2,t^3s{:}-t^4+s^4)$$
The second and fourth options cross infinity and correspond to walls between twocrossing knots and the unknot, one separating a twocrossing knot with writhe $3$ from the unknot with writhe $1$ and one from the mirror image of a twocrossing knot with writhe $-3$ to the unknot with writhe $-1$.
The first and third options are mirror images, however they are in the same wall isotopic as can be seen by constructing a path between them. By taking $P_1$ and sending $(t,s)$ to $(-s,t)$ by acting with an appropriate element from $PSL_2(\R)$ we get $(-t^3s{:}t^2s^2{:}-ts^3{:}t^4+s^4)$. Permuting polynomials and moving signs gives  $(ts^3{:}t^2s^2{:}-t^3s{:}t^4+s^4)=P_3(t{:}s)$. This wall separates knots with encomplexed writhe $w=1$ from knots with encomplexed writhe $w=-1$ and correspond to the wall between the two options for the unknot (encomplexed writhe $w=\pm 1$). So in total we have at most three components.\\

It is left to examine the second case (2), a solitary self intersection arising from two nonreal points.

It is left to examine the case where two complex conjugate branches intersect, we can assume that $P(i{:}1)=P(-i{:}1)=(0{:}0{:}0{:}1)$. We again look at the parametrization and note that the first three polynomials have a factor $t^2+s^2$. A similar argument as before gives us:
$P(t{:}s)=(t^2s^2+s^4{:}t^3s+ts^3{:}t^4+t^2s^2{:}ats^3+bs^4)$
Some slight reparametrization gives
$P(t{:}s)=((t^2+s^2)^2{:}(t^2+s^2)(t^2-s^2){:}2ts(t^2+s^2){:}ats^3+bs^4)$.
This knot is planar if and only if $a=b=0$. We can continuously change $a,b$ until $a=0,b=1$. So there is only one wall arising from a solitary self intersection.

After projecting to a plane this is clearly a circle without double points (the doublepoint we had before was arising from nonreal parts and was situated at infinity so it is not visible). So the only doublepoint is the one arising from ($\pm i$) and so the knots at both sides of this wall must have writhe $\pm 1$ and be the trivial knots.
Both of these walls separate the same pair of knots as can be seen by degenerating them to each other, at the edge connecting this solitary doublepoint wall and the real doublepoint wall we will have a planar curve with a cusp, exactly the same situation as when a real doublepoint turns to a solitary doublepoint.

\end{proof}

\begin{corollary}
 There are the following 4 rigid isotopy classes of rational knots of degree $d=4$.
\begin{itemize}
 \item The twocrossing knot with writhe $3$.
 \item An unknot with writhe $1$.
 \item An unknot with writhe $-1$.
 \item The mirror image of the twocrossing knot with writhe $-3$.
\end{itemize}
\end{corollary}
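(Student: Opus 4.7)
The plan is to combine the wall bound from the preceding lemma with the rigid-isotopy invariance of the encomplexed writhe to pin the number of classes at exactly four.

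First, I would record the standard structural fact that $\mathcal{C}_4$ is connected (as a Zariski-open subset of $\R P^{19}$) and that the stratification $\Sigma=\Sigma^1\supset\Sigma^2\supset\cdots$ has $\Sigma^{\geq 2}$ of real codimension at least two. Hence $\Sigma^{\geq 2}$ does not locally disconnect $\mathcal{C}_4$, and any two points of $\mathcal{K}_4$ in the same component of $\mathcal{C}_4\setminus\Sigma^{\geq 2}$ may be joined by a path meeting $\Sigma^1\setminus\Sigma^2$ only transversally. Consequently, the components of $\mathcal{K}_4$ are exactly the chambers cut out by the walls $\Sigma^1\setminus\Sigma^2$.

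Second, I would invoke the preceding lemma to bound the number of walls by three and then read off, from the explicit normal forms $P_1,P_2,P_3,P_4$ of case (1) and the parametrization of case (2), what each wall separates. A transversal wall crossing either flips one real double point or pushes a pair of complex conjugate branches through each other, so exactly one contribution to the crossing sum changes sign and $w$ jumps by $\pm 2$. Direct inspection of the normal forms then yields: the $P_2$ wall separates the twocrossing knot $(w=3)$ from an unknot $(w=1)$; the $P_4$ wall separates the mirror twocrossing knot $(w=-3)$ from an unknot $(w=-1)$; and the remaining wall, containing the $P_1\sim P_3$ family and (via the cuspidal degeneration in the lemma) the case-(2) solitary-intersection family, separates the two unknot chambers with $w=\pm 1$.

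Third, I would assemble the conclusion. The four candidate classes have pairwise distinct writhes $\{3,1,-1,-3\}$, so by $w$-invariance they lie in distinct components of $\mathcal{K}_4$. Conversely, the three walls chain them together as
$$\text{twocrossing}\,(w{=}3)\;\leftrightarrow\;\text{unknot}\,(w{=}1)\;\leftrightarrow\;\text{unknot}\,(w{=}{-}1)\;\leftrightarrow\;\text{mirror twocrossing}\,(w{=}{-}3),$$
leaving no room for a further chamber. Hence $\mathcal{K}_4$ has exactly four components, matching the list.

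The principal obstacle is the labelling of the middle wall. One must verify that the $P_1,P_3$ representatives and the case-(2) representative truly bound the same pair of unknot chambers: the $PSL_2(\R)$ reparametrization reducing $P_1$ to $P_3$ handles the real-intersection side, and a cuspidal degeneration of a solitary node to a real node identifies the case-(2) wall with the same pair of chambers. A local writhe computation on either side of each wall portion then confirms that the chambers are precisely the unknots with $w=+1$ and $w=-1$, completing the argument.
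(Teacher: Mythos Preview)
Your proposal is correct and follows the same line as the paper: the paper gives no separate proof for this corollary, treating it as immediate from the preceding lemma, and what you have written is exactly the argument the paper leaves implicit---using connectedness of $\mathcal{C}_4$, the codimension-$\geq 2$ nature of $\Sigma^{\geq 2}$, the at-most-three wall components together with the normal forms $P_1,\dots,P_4$ and the solitary case, and the writhe invariant to separate and exhaust the four chambers. Your identification of the ``middle wall'' labelling as the one point needing care, and your resolution via the $P_1\sim P_3$ reparametrization and the cuspidal degeneration to the solitary wall, matches precisely what the lemma's proof establishes.
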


\begin{remark}
This gives us a complete rigid and smooth isotopy classification of knots of degree $\leq4$, by just ignoring the demand of rigidity we easily see that up to smooth isotopy the possible knots are the line, the circle, the twocrossing knot and its mirror image since any knot must be adjacent to one of our three walls.
\end{remark}


\begin{remark}
A reason for why knots of degree $\leq4$ are relatively easy to find (compared to knots for higher degrees) is that they lie on quadrics (which are fairly simple to understand) while knots of higher degrees lies surfaces of degree 3 or higher in general.
\end{remark}

Previously we examined the highest strata of the discriminant (consisting of curves having a self intersection) of the space of knots of degree 4. For degree 5 we do something similar, but instead of examining walls, we examine the components of the second highest strata ($\Sigma_d^2\setminus\Sigma_d^3$) called {\em edges} where walls meet, which consists of curves with two self intersections. The following lemma will be useful in examining degree 5.

\begin{lemma}
A curve of degree 5 having 3 distinct self intersections is planar.
\end{lemma}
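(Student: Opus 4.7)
The plan is to use the standard intersection-theoretic bound: a plane in $\C P^3$ meets a degree $5$ curve in at most $5$ points (counted with multiplicity) unless the curve is contained in the plane. The strategy is to exhibit a plane that meets the curve with total multiplicity at least $6$, forcing containment.

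First, given the three distinct self-intersection points $p_1, p_2, p_3 \in \C P^3$ of the curve, choose a plane $H \subset \C P^3$ containing all three. Such an $H$ always exists: any three points in $\C P^3$ span at most a $2$-dimensional projective subspace, and if they happen to be collinear, one can take any plane through that line. Since the three self-intersection points of a real algebraic curve come either as real points or in complex conjugate pairs, we can even arrange for $H$ to be defined by real equations, though this is not needed for the argument.

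Next, I would bound from below the intersection multiplicity of $H$ with the complexified curve $\C K$ at each $p_i$. At a self-intersection point, the parametrization $\C P^1 \to \C K$ has two distinct preimages, so locally $\C K$ has two smooth analytic branches through $p_i$. Each branch meets $H$ at $p_i$, and each contributes at least $1$ to the local intersection multiplicity $(H \cdot \C K)_{p_i}$, so $(H \cdot \C K)_{p_i} \geq 2$ regardless of how the tangent lines of the branches sit with respect to $H$. Summing over the three self-intersections gives
\[
\sum_{i=1}^3 (H \cdot \C K)_{p_i} \geq 6.
\]

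Finally, if $\C K$ were not contained in $H$, Bezout's theorem (used earlier in the paper for the quadric lemma) would give $H \cdot \C K = \deg(\C K) = 5 < 6$, a contradiction. Hence $\C K \subset H$, and the real curve $K$ is planar.

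The only delicate point is the local multiplicity estimate $(H \cdot \C K)_{p_i} \geq 2$, which must hold uniformly for both real and solitary (complex conjugate) self-intersections; since I am working directly with the complexification and with two analytic branches through each $p_i$, the argument is identical in both cases, and no case split is required.
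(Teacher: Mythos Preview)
Your argument is correct and is essentially identical to the paper's own proof: take a plane through the three self-intersection points, observe that the curve meets it with multiplicity at least $6$, and conclude by B\'ezout that the curve lies in the plane. The paper states this in a single sentence without spelling out the local multiplicity estimate or the existence of the plane, but the underlying idea is exactly the same.
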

\begin{proof}
Take a plane through the three points where it selfintersects. Counting with multiplicity the curve will intersect the plane at least 6 times, and so must be contained in it.
\end{proof}

We need to study the edges (of codimension 2) where walls intersect. Each knot is adjacent to such an edge as the following lemma shows.

\begin{lemma}\label{closeedge}
On the boundary of each component in $\mathcal{K}_5\subset\mathcal{C}_5$ there is a curve with just two self intersections.
\end{lemma}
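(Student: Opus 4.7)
The plan is to locate a wall $W$ on $\partial\mathcal{U}$ and deform inside $\bar W$ to create a second transversal double point, thereby producing a curve in $(\Sigma_5^2\setminus\Sigma_5^3)\cap\partial\mathcal{U}$. Since $\mathcal{C}_5$ is connected and the discriminant is a nonempty codimension-one hypersurface, $\partial\mathcal{U}$ is nonempty, and at generic points it meets $\Sigma_5^1\setminus\Sigma_5^2$; fix a wall $W\subset\partial\mathcal{U}$ and pick a parametrization $P_0\in W$ whose unique transversal double point is $P_0(\tau_1)=P_0(\tau_2)=p_0$.

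To manufacture a second double point while remaining in $\bar W$, I would choose another pair $\sigma_1,\sigma_2\in\R P^1$, disjoint from $\{\tau_1,\tau_2\}$, and work with the two local subsets of $\mathcal{C}_5$ near $P_0$
\[ A=\{P:P(\tau_1')=P(\tau_2')\text{ for some }(\tau_1',\tau_2')\text{ near }(\tau_1,\tau_2)\},\quad B=\{P:P(\sigma_1')=P(\sigma_2')\text{ for some }(\sigma_1',\sigma_2')\text{ near }(\sigma_1,\sigma_2)\}. \]
Each is a smooth codimension-one submanifold of $\mathcal{C}_5$ near $P_0$; the submanifold $A$ is precisely the local piece of $W$. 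A transversality check shows that $A$ and $B$ intersect transversally at $P_0$, so $A\cap B$ is a nonempty codimension-two submanifold consisting generically of parametrizations with exactly two transversal double points. Pick such a $P_1\in A\cap B$ close to $P_0$ and connect $P_0$ to $P_1$ by a short path in $A$; the interior of this path lies entirely in $W$, because every interior curve has exactly one double point located near $(\tau_1,\tau_2)$ and the corresponding points of $\Sigma_5^1\setminus\Sigma_5^2$ are in the same connected component as $P_0$ by local connectivity. Consequently $P_1\in\bar W\subset\partial\mathcal{U}$ and $P_1\in\Sigma_5^2\setminus\Sigma_5^3$, as required.

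The main obstacle is the transversality step: we must show that the two codimension-one conditions defining $A$ and $B$ cut $\mathcal{C}_5$ independently. This reduces to checking that the evaluation functionals at the four distinct parameters $\tau_1,\tau_2,\sigma_1,\sigma_2$ impose linearly independent conditions on the 24-dimensional space of quadruples of degree-five homogeneous polynomials in $(t{:}s)$, which follows immediately from Lagrange interpolation since degree-five polynomials interpolate any six prescribed values. A secondary but routine issue is ensuring that the newly created singularity at $P_1$ is an ordinary transversal double point rather than a cusp, tacnode, or triple point; each such degenerate possibility lies in a stratum of codimension at least three in $\mathcal{C}_5$ and is avoided by choosing the perturbation direction generically inside $A\cap B$.
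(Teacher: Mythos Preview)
There is a genuine gap at the heart of your argument: you assert that $A$ and $B$ intersect transversally \emph{at $P_0$}, but $P_0\notin B$. Your $P_0$ was chosen on a wall, so it has a \emph{unique} transversal double point, the one at $(\tau_1,\tau_2)$; since $\sigma_1,\sigma_2$ are arbitrary parameters disjoint from $\{\tau_1,\tau_2\}$, we have $P_0(\sigma_1)\neq P_0(\sigma_2)$ and hence $P_0$ does not lie in $B$. Worse, because this inequality is open, there is a whole neighborhood of $P_0$ in $\mathcal{C}_5$ disjoint from $B$, so the statement ``$B$ is a smooth codimension-one submanifold of $\mathcal{C}_5$ near $P_0$'' is empty. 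Your Lagrange-interpolation computation shows that the evaluation conditions at four parameters are independent, but that only tells you that \emph{if} $A$ and $B$ do meet somewhere, they meet transversally there; it does not produce a point of $A\cap B$, let alone one close enough to $P_0$ to lie in $\bar W\subset\partial\mathcal{U}$. The step ``pick such a $P_1\in A\cap B$ close to $P_0$'' is therefore unjustified, and the whole mechanism for keeping $P_1$ in $\partial\mathcal{U}$ collapses.

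What is missing is a genuine, non-infinitesimal deformation inside the wall that forces a second double point to appear. The paper supplies exactly this: starting from $P_0$, it projects the curve to a generic plane so that the planar image has several ordinary nodes (for degree $5$ there are six), one of which comes from the actual self-intersection. It then freezes the planar projection and varies only the height function---a rational function of degree $5$ with fixed denominator---subject to the single linear constraint that the heights at $\tau_1,\tau_2$ stay equal (so the path remains in the wall). In the remaining three-dimensional family one can continuously drive the height difference at a \emph{second} projected node $(\sigma_1,\sigma_2)$ to zero without the curve becoming planar; at that moment a second genuine self-intersection is born and one has reached an edge in $\bar W$. The point is that $\sigma_1,\sigma_2$ are not arbitrary: they must be chosen so that $P_0(\sigma_1)$ and $P_0(\sigma_2)$ already agree in two of the three affine coordinates, and the deformation is along a concrete one-parameter direction, not a local transversality argument.
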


\begin{proof}
Take an arbitrary knot parametrization of degree $5$ in $\mathcal{K}_5$. Any other knot will either be rigidly isotopic to it (so there exists a path connecting them not intersecting the discriminant) or they will not be rigidly isotopic, meaning there will be walls between them. Take a point on such a wall. This will be a curve with one self intersection. Project the corresponding curve to a plane in such a way that it has at least two double points after projection. Consider the rational function describing the height over this plane. It will be a rational function of degree 5. We keep the denominator fixed so as not to disturb the image on the plane. We add the restriction that the height function will take the same value at our self intersection and some other pair of points that are not projected to the same point. Another dimension disappears from scaling. This leaves us three dimensions. By moving in these three dimensions we will not end up in a plane since our second pair of points have the same relative nonzero height and so we cannot end up anywhere planar. We move so that we get another self intersection (possible since we just make sure to move in such a direction that the relative heights between our pair of points being projected to a second double point decreases). We will then end up at an edge, so each knot parametrization is adjacent to an edge consisting of a curve with two self intersections.
\end{proof}

The following three lemmas will describe the different possibilities for the edges. We will consider a path in the strata $\Sigma^1\setminus\Sigma^2$ to be a {\em wall isotopy} while a path in the strata $\Sigma^2\setminus\Sigma^3$ will be called an {\em edge isotopy}.

\begin{lemma}\label{2solitary}
A curve parametrization of degree 5 with two solitary self intersections arising from nonreal points in the preimage is edge isotopic to
$$P(t{:}s)=((t^2+1)(t^2+4)t{:}(t^2+s^2)(t^2+4s^2)s{:}(t^2+s^2)t^2s{:}(t^2+4s^2)ts^2).$$
\end{lemma}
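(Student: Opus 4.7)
The plan is to fix a canonical form by successively using the group actions available on source and target. First, I use the $PGL_2(\R)$ action on $\C P^1$ to normalize the preimages of the two solitary self-intersections. Since $PGL_2(\R)$ acts transitively on nonreal points of $\C P^1$, one conjugate preimage pair can be taken to be $\{(i{:}1),(-i{:}1)\}$. Its stabilizer in $PGL_2(\R)$ is a copy of $SO(2)$, which acts on the second pair, and after a suitable rotation the second pair has the form $\{(\mu i{:}1),(-\mu i{:}1)\}$ for some $\mu\in(0,1)\cup(1,\infty)$. Swapping the roles of the two pairs interchanges $\mu$ with $1/\mu$, so I may assume $\mu>1$. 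Since the coefficients of the parametrization depend continuously on $\mu$ and no extra singularity is forced to appear on $(1,\infty)$, an edge isotopy along this ray reduces to $\mu=2$.

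Next I apply $PGL_4(\R)$ on the target to normalize the two (necessarily distinct, else we would be in $\Sigma^3$) real self-intersection image points $q_1=P(i{:}1)$ and $q_2=P(2i{:}1)$ to $q_1=(0{:}0{:}0{:}1)$ and $q_2=(0{:}0{:}1{:}0)$. The vanishing $P(\pm i{:}1)=(0{:}0{:}0{:}1)$ forces three of the four coordinate polynomials to be divisible by $t^2+s^2$; similarly, vanishing at $\pm 2i$ forces divisibility by $t^2+4s^2$ for three coordinates. Combining, one obtains factorizations
\[\Pi_0 = (t^2+s^2)(t^2+4s^2)L_0,\quad \Pi_1 = (t^2+s^2)(t^2+4s^2)L_1,\]
\[\Pi_2 = (t^2+s^2)C_2,\quad \Pi_3 = (t^2+4s^2)C_3,\]
with $L_0,L_1$ real linear forms and $C_2,C_3$ real cubic forms, subject to $C_2(\pm 2i{:}1)\neq 0$ and $C_3(\pm i{:}1)\neq 0$ so that $P$ is well defined at the singular preimages.

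Finally, the residual stabilizer of $(q_1,q_2)$ in $PGL_4(\R)$ acts on $(\Pi_0,\Pi_1)$ by an arbitrary $GL_2(\R)$ and on $\Pi_2$ (resp.\ $\Pi_3$) by $\Pi_2\mapsto\mu\Pi_2+\alpha\Pi_0+\beta\Pi_1$ (resp.\ $\Pi_3\mapsto\lambda\Pi_3+\gamma\Pi_0+\delta\Pi_1$). Since $L_0,L_1$ must be linearly independent (otherwise the image curve would be planar), I normalize $L_0=t$, $L_1=s$; the remaining freedom then permits replacing $C_2$ by $\mu C_2+(t^2+4s^2)(\alpha t+\beta s)$ and $C_3$ by $\lambda C_3+(t^2+s^2)(\gamma t+\delta s)$. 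This reduces $C_2$ to its class in the two-dimensional quotient of cubics by $(t^2+4s^2)$ times linear forms, and similarly for $C_3$, so up to scaling each class lives in $\R P^1$ and the canonical representatives form a subset of $\R P^1\times\R P^1$. The main obstacle is showing that the target representatives $C_2=t^2s$ and $C_3=ts^2$ lie in the same connected component of this moduli as an arbitrary admissible $(C_2,C_3)$. I would handle this by exhibiting an explicit one-parameter family joining them and verifying via a resultant computation that the factored form of $P$ prevents any additional self-intersection or cusp from appearing along the path, so the family stays inside the substratum of $\Sigma^2\setminus\Sigma^3$ consisting of curves with exactly two solitary singular points.
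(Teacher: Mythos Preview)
Your reduction is essentially the paper's argument, reorganized. The paper does not first normalize the preimages via the source action; it keeps the conjugate pairs $(1{:}z),(1{:}\bar z)$ and $(1{:}w),(1{:}\bar w)$ as free parameters throughout, fixes the two image points by a target transformation, normalizes the first two coordinates exactly as you do, and then eliminates the $t^5$ and $s^5$ terms from $x_2,x_3$ --- which is your passage to the quotient by $(t^2+4s^2)\cdot(\text{linear})$ and $(t^2+s^2)\cdot(\text{linear})$, written in a concrete basis. At that point the residual moduli is $\{(a,b)\ne(0,0)\}\times\{(c,d)\ne(0,0)\}$ together with $z,w$ nonreal and $z\ne w,\bar w$, visibly connected; the paper simply moves $(z,w,a,b,c,d)$ to the canonical values.

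Two places where your write-up is weaker than the paper's. First, the step ``an edge isotopy along this ray reduces to $\mu=2$'' is asserted before any target normalization has been done, so ``the coefficients depend continuously on $\mu$'' has no content yet: you have only fixed preimages, not a family of parametrizations. That deformation belongs \emph{after} the full reduction, as a path in the final parameter space, where it becomes immediate. Second, the resultant computation you propose is unnecessary. The paper invokes the earlier lemma that a degree-$5$ curve with three distinct self-intersections is planar; once nonplanarity along the family is established --- and this reduces precisely to your conditions $C_2(2i{:}1)\ne 0$ and $C_3(i{:}1)\ne 0$ --- no third singularity can appear, so the family automatically stays in $\Sigma^2\setminus\Sigma^3$ and connectedness is trivial. (A minor point: for these group actions to yield edge isotopies you need the identity components, so $PSL_2(\R)$ and $PSL_4(\R)$ rather than $PGL$; your normalizations all go through with the connected groups.)
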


\begin{proof}
By acting on $\R P^3$ with a linear transformation we place the intersections at $(0{:}0{:}0{:}1)$ and $(0{:}0{:}1{:}0)$. We will also assume that the points which end up at the selfintersection are $(1{:}z),(1{:}\bar{z}),(1{:}w)$ and $(1{:}\bar{w})$. The parametrization will look as follows:
$$x_0(t,s)=(t-zs)(t-\bar{z}s)(t-ws)(t-\bar{w}s)(at+bs)$$
$$x_1(t,s)=(t-zs)(t-\bar{z}s)(t-ws)(t-\bar{w}s)(ct+ds)$$
$$x_2(t,s)=(t-zs)(t-\bar{z}s)p(t,s)$$
$$x_3(t,s)=(t-ws)(t-\bar{w}s)q(t,s)$$
Since it was assumed that the knot is nonplanar the first two coordinates cannot be linearly dependent, so it follows that we can put them in the form where $a=d=1,c=b=0$ by a linear transformation (any eventual sign is moved to $x_3$. The polynomial $x_0$ has a leading $t^5$ term with coefficient one while $x_1$ has a term $s^5$ with nonzero coefficient (namely, $|zw|^2$) which we use to eliminate all $t^5$ and $s^5$ terms from the last two polynomials. This give us a slightly nicer form for the parametrization:
$$x_0(t,s)=(t-zs)(t-\bar{z}s)(t-ws)(t-\bar{w}s)t$$
$$x_1(t,s)=(t-zs)(t-\bar{z}s)(t-ws)(t-\bar{w}s)s$$
$$x_2(t,s)=(t-zs)(t-\bar{z}s)(at^2s+bts^2)=(t-zs)(t-\bar{z}s)ts(at+bs)$$
$$x_3(t,s)=(t-ws)(t-\bar{w}s)(ct^2s+dts^2)=(t-ws)(t-\bar{w}s)ts(ct+ds).$$
As long as $|zw|^2\neq0$ and $x_2$,$x_2$ are linearly independent the knot is nonplanar and so stays on our edge. If $x_2$ and $x_3$ were to be linearly dependent then either $a=b=0$ or $c=d=0$. As long as that is avoided, the knot is nonplanar. This is not a strong demand since $\R^2\backslash\{(0,0)\}$ is connected. Similarly we can change $z,w$ as long as we take care not to let them become real or equal. This allows us to transform this edge into a canonical form:
$$x_0(t,s)=(t^2+1)(t^2+4)t$$
$$x_1(t,s)=(t^2+1)(t^2+4)s$$
$$x_2(t,s)=(t^2+1)t^2s$$
$$x_3(t,s)=(t^2+4)ts^2.$$
\end{proof}

\begin{remark}

It is especially interesting to note that this canonical form was independent on the orientation of the knot (that is, if we had switched knot orientation in the beginning we would still be edge isotopic to this canonical form) and multiplying the last coordinate with $-1$. Since taking the mirror image means that writhe switches sign, and we cannot distinguish a signshift, this edge must be separating a pair of walls between knots with writhe $0$, $0$, $2$ and $-2$ since this is the only possibly configuration of writhes around the edge that is not disturbed by a shifting of signs. We can also move $z$ toward the real line, letting it and $\bar{z}$ split into two real points causing a self intersection as long as we stay away from the situation when $z=0$. this means that at least one of the cases of one real, one nonreal pair self intersecting can be transformed into this canonical form.
\end{remark}

\begin{lemma}\label{1sol1nonsol}
A curve parametrization of degree 5 with one solitary self intersections and one non solitary self intersection arising from two nonreal points and two real points in the preimage is edge isotopic to
$$P(t{:}s)=((t^2+1)(t^2+4)t{:}(t^2+s^2)(t^2+4s^2)s{:}(t^2+s^2)t^2s{:}(t^2+4s^2)ts^2).$$
\end{lemma}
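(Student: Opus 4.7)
The plan is to mirror the proof of Lemma~\ref{2solitary}, adapted for the fact that one self-intersection is real and the other solitary. First, using an element of $PGL_4(\R)$ I place the solitary intersection at $(0{:}0{:}0{:}1)$ and the real intersection at $(0{:}0{:}1{:}0)$. Denote by $(1{:}z),(1{:}\bar z)$ with $z\in\C\setminus\R$ the preimages of the solitary intersection, and by $(1{:}u),(1{:}v)$ with $u\neq v$ real the preimages of the real one. The vanishing conditions force the real quadratic $(t-zs)(t-\bar zs)$ to divide $x_0,x_1,x_2$ and $(t-us)(t-vs)$ to divide $x_0,x_1,x_3$; hence both quadratics divide $x_0$ and $x_1$, leaving only a linear factor in each.

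After normalizing those linear factors to $t$ and $s$ via the $PGL_2(\R)$ action (with any resulting sign absorbed into $x_3$) and killing the $t^5$ and $s^5$ monomials of $x_2,x_3$ using multiples of $x_0,x_1$, the parametrization takes the form
\[
x_0=(t-zs)(t-\bar zs)(t-us)(t-vs)\,t,\quad x_1=(t-zs)(t-\bar zs)(t-us)(t-vs)\,s,
\]
\[
x_2=(t-zs)(t-\bar zs)\,ts\,(at+bs),\quad x_3=(t-us)(t-vs)\,ts\,(ct+ds),
\]
with $(a,b),(c,d)\in\R^2\setminus\{0\}$, the nonvanishing being equivalent to nonplanarity. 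The connected spaces $\R^2\setminus\{0\}$ and $\C\setminus\R$ allow me to deform $(a,b),(c,d)$ and $z$ freely to their target values, matching the canonical form of Lemma~\ref{2solitary} in all coordinates except possibly in the quadratic $(t-us)(t-vs)$.

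The main obstacle is converting the real pair $\{u,v\}$ into a complex conjugate pair $\{w,\bar w\}$ while remaining in the edge stratum $\Sigma^2\setminus\Sigma^3$. A naive one-parameter deformation of $\{u,v\}$ alone must pass through the degenerate configuration $u=v$, where the two preimages merge into a cusp and the intersection count drops, leaving the stratum. To avoid this, I would construct a simultaneous swap in the spirit of the remark following Lemma~\ref{2solitary}: move $z$ toward the real axis so that the solitary intersection becomes a real one, and simultaneously move $\{u,v\}$ off the real axis so that the real intersection becomes solitary, timing the two motions so that the total number of transverse self-intersections stays at two throughout. The technical heart of the argument, and the step I expect to be the most delicate, is verifying via discriminant computations for the two preimage quadratics and a resultant computation between $x_2$ and $x_3$ that along such an explicit one-parameter family no cusp forms, no third intersection appears, and the two intersection points in $\R P^3$ never coincide.
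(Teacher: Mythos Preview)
Your normalization mimics Lemma~\ref{2solitary} but overlooks the constraint that is the whole point of this lemma. For the map to $\R P^3$ to be well-defined the four coordinate polynomials can have no common zero; in your normal form this forces $u\ne 0$, $v\ne 0$, $au+b\ne 0$, $av+b\ne 0$ in addition to $(a,b),(c,d)\ne(0,0)$. Those extra conditions cut the $(a,b)$--plane, for fixed $u,v$, into several sectors, so the claim that $(a,b)$ and $(c,d)$ can be deformed freely in $\R^2\setminus\{0\}$ is false, and with it the reduction to ``just'' converting $\{u,v\}$ into a complex conjugate pair. (A side remark: the linear cofactors of $x_0,x_1$ are normalized by the $PGL_4(\R)$ action on the target, not by $PGL_2(\R)$ on the source; the $PGL_2(\R)$ freedom on $\C P^1$ is still entirely unspent in your setup, and it is exactly what the paper exploits.)

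The paper's route differs precisely here. It spends $PGL_2(\R)$ to place the \emph{real} preimage pair at $(0{:}1)$ and $(1{:}0)$. The resulting normal form has $x_2=t^2s^2(at+bs)$ with $(a,b)\in\R^2\setminus\{0\}$ genuinely unconstrained, while $x_3=(t-zs)(t-\bar zs)(ct^3+ds^3)$ now carries the sharper condition $c\ne 0$ \emph{and} $d\ne 0$, isolating the obstruction into four sign cases $(c,d)=(\pm 1,\pm 1)$. These four are then identified not by an explicit path but by symmetry: one of them is already known, via the remark after Lemma~\ref{2solitary}, to lie on the two-solitary edge, which is invariant under orientation reversal and under a sign change of the last coordinate; applying those two symmetries manufactures the other three cases. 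Your proposed ``simultaneous swap'' is neither carried out nor needed; the discrete obstruction you failed to detect is exactly what the symmetry argument dissolves.
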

\begin{proof}

After a linear transformation of $\R P^3$ and $\R P^1$ we can assume that the real pair is $(0{:}1)$ and $(1{:}0)$ and the complex pair is $(1{:}z)$ and $(1{:}\bar{z})$. They are sent to $(0{:}0{:}0{:}1)$ and $(0{:}0{:}1{:}0)$ respectively. This gives us a parametrization on the following form:
$$x_0(s,t)=ts(t-zs)(t-\bar{z}s)(at+bs)$$
$$x_1(s,t)=ts(t-zs)(t-\bar{z}s)(ct+ds)$$
$$x_2(s,t)=tsp(s,t)$$
$$x_3(s,t)=(t-zs)(t-\bar{z}s)q(s,t).$$
Since the knot was nonplanar, $x_0$ and $x_1$ must be linearly independent so after a linear transformation (and possibly moving a sign to $x_3$) we can assume that $a=d=1,b=c=0$. Then $x_0$ has a term $st^4$ with coefficient $1$ and $x_0$ has a term $s^4t$ with coefficient $|z|^2$. They can be used to eliminate those terms from $x_2$ and $x_3$ giving us something on the following form:
$$x_0(s,t)=t^2s(t-zs)(t-\bar{z}s)$$
$$x_1(s,t)=ts^2(t-zs)(t-\bar{z}s)$$
$$x_2(s,t)=ts(at^2s+bts^2)=s^2t^2(at+bs)$$
$$x_3(s,t)=(t-zs)(t-\bar{z}s)(ct^3+ds^3).$$
The four polynomials are linearly independent as long as $x_2$,$x_3$ are linearly independent since $x_0$ is the only polynomial with $st^4$ term, and $x_1$ is the only polynomial with $s^4t$. The polynomials $x_2$ and $x_3$  are obviously linearly independent as long as $(a,b)\neq(0,0)$ and $(c,d)\neq(0,0)$. This is however not enough since all the polynomials cannot be allowed to have a common zero since they parametrize something in the projective space. The only possible common zeroes of all four polynomials are $(1,z),(1,\bar{z}),(1,0)$ and $(0,1)$. This means that neither $c$ nor $d$ can be zero. On the other hand, as long as $a,b$ are not both zero at once, we can change them at will so we assume that $a=1$, $b=0$. We can normalize $c,d$ such that $c=\pm 1$, $d=\pm1$. This gives us four cases. However, at least one of those cases arise from the double solitary pair case, when letting one self intersection become real. For this case we know that is is independent of sign-switch of a polynomial and orientation-switch of the knot. Assume $(c,d)=(x,y)$ is this case. Then the case $(c,d)=(-x,-y)$ is also situated on the same edge to it, since it is obtained by switching the sign of the last coordinate. If we instead switch orientation of the knot (sending $(s,t)$ to $(s,-t)$) we obtain the following form:
$$x_0(s,t)=t^2s(t+zs)(t+\bar{z}s)$$
$$x_1(s,t)=ts^2(t+zs)(t+\bar{z}s)$$
$$x_2(s,t)=-s^2t^3$$
$$x_3(s,t)=(-t-zs)(-t-\bar{z}s)(-xt^3+ys^3).$$
By moving $(z,\bar{z})$ to $(-\bar(z),-z)$ and switching the signs of the last two coordinate with a rotation we obtain
$$x_0(s,t)=t^2s(t-zs)(t-\bar{z}s)$$
$$x_1(s,t)=ts^2(t-zs)(t-\bar{z}s)$$
$$x_2(s,t)=s^2t^3$$
$$x_3(s,t)=(t-zs)(t-\bar{z}s)(-xt^3+ys^3).$$
That is, the case $(c,d)=(-x,y)$. Applying the same trick to the case $(c,d)=(-x,-y)$ we at last get the case $(c.d)=(x,-y)$ giving us the result that there is only one edge with one real and one solitary pair, which is the same as edge of curves with two solitary self intersections and so can be assigned the same canonical form. Again we note that we can move our $z$ until it hits the real line (staying away from $0$) to get a case of two real pairs situated on the same edge as this one and so get the same canonical form as lemma \ref{2solitary}.
\end{proof}

\begin{lemma}\label{2nonsol}
A curve parametrization of degree 5 with two non solitary self intersections is edge isotopic to either
$$P(t{:}s)=((t^2+1)(t^2+4)t{:}(t^2+s^2)(t^2+4s^2)s{:}(t^2+s^2)t^2s{:}(t^2+4s^2)ts^2),$$
$$P(t{:}s)=(t^2s(t-s)(3t-s){:}ts^2(t-s)(3t-s){:}\pm t^2s^2(3t-2s){:}(t-s)(3t-s)(t^3+s^3))$$ or
$$P(t{:}s)=(t^2s(t^2-s^2){:}\pm ts^2(t^2-s^2){:}t^2s^3,(t^2-s^2)(t^3\pm s^3)).$$
Where the first two possibilities are independent of orientation.
\end{lemma}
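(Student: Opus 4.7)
The plan is to mirror the normalization strategy used in Lemmas \ref{2solitary} and \ref{1sol1nonsol}. First, I would apply a linear transformation of $\R P^3$ to place the two self-intersection images at $(0{:}0{:}0{:}1)$ and $(0{:}0{:}1{:}0)$, and a Möbius transformation of $\R P^1$ fixing one real pair at $\{(0{:}1),(1{:}0)\}$; the other real pair is then at $\{(1{:}\alpha),(1{:}\beta)\}$ for distinct real nonzero $\alpha,\beta$. Divisibility forces the parametrization to have the form
\begin{align*}
x_0 &= ts(t-\alpha s)(t-\beta s)(a_0 t+b_0 s),\\
x_1 &= ts(t-\alpha s)(t-\beta s)(a_1 t+b_1 s),\\
x_2 &= ts\cdot p(t,s),\\
x_3 &= (t-\alpha s)(t-\beta s)\cdot q(t,s),
\end{align*}
with $p,q$ homogeneous of degree $3$. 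Nonplanarity forces $x_0,x_1$ to be linearly independent, so Gauss elimination (moving any sign into $x_3$) yields $x_0=t^2s(t-\alpha s)(t-\beta s)$ and $x_1=ts^2(t-\alpha s)(t-\beta s)$, and the substitutions $x_i\mapsto x_i+\mu x_0+\nu x_1$ kill the $t^4s$ and $ts^4$ monomials of $x_2,x_3$, leaving a small number of free coefficients in $p$ and $q$.

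The main combinatorial invariant is the linking type of the two preimage pairs on $\R P^1\cong S^1$, detected by the sign of $\alpha\beta$. The involution $(t{:}s)\leftrightarrow(s{:}t)$ sends $\alpha\mapsto 1/\alpha$ and $\beta\mapsto 1/\beta$, so combined with rescaling the unlinked case $\alpha\beta>0$ normalizes to $(\alpha,\beta)=(1,3)$ and the linked case $\alpha\beta<0$ to $(\alpha,\beta)=(1,-1)$. In each case I would continuously deform the remaining coefficients of $p,q$ to reach an explicit normal form, then enumerate the connected components of the admissible parameter space, i.e.\ those coefficients for which the parametrization stays in $\Sigma_5^2\setminus\Sigma_5^3$ and nonplanar. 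The unlinked case produces Form 2; the $\pm$ on its third coordinate records the discrete ambiguity left after accounting for orientation reversal of $\R P^1$. The linked case produces Form 3, with its two $\pm$ signs similarly encoding the residual discrete choices.

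Form 1 enters the statement because the linked configuration also admits a cusp degeneration within $\Sigma_5^2\setminus\Sigma_5^3$: collapsing the pair $\{(1{:}1),(1{:}-1)\}$ at a common real parameter and then splitting it into a complex conjugate pair keeps us in the codimension-two stratum and lands us in the two-solitary edge whose canonical form was established in Lemma \ref{2solitary}. Thus a subset of the linked real configurations falls onto Form 1 rather than Form 3, the split being governed by whether the cusp-split path avoids acquiring a third self-intersection. Finally, the orientation independence of Forms 1 and 2 is checked by combining the involution $(t{:}s)\leftrightarrow(s{:}t)$ with a coordinate permutation and sign flip in $\R P^3$ and observing invariance up to the already recorded $\pm$; for Form 3 the same involution exchanges its two sign choices, as claimed.

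The main obstacle will be the explicit bookkeeping for the free coefficients of $p$ and $q$: enumerating the connected components of the admissible parameter space, then ruling out spurious identifications between the three canonical forms by an invariant such as the encomplexed writhe or by a direct inspection of the walls adjacent to each edge.
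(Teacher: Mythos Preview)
Your overall strategy coincides with the paper's: it too places the two self-intersection images at $(0{:}0{:}0{:}1)$ and $(0{:}0{:}1{:}0)$, normalizes one preimage pair to $\{(0{:}1),(1{:}0)\}$ and the other to $\{(1{:}1),(1{:}c)\}$, Gauss-eliminates to reach
\[
x_0=t^2s(t-s)(t-cs),\quad x_1=ts^2(t-s)(t-cs),\quad x_2=t^2s^2(at+bs),\quad x_3=(t-s)(t-cs)(t^3+es^3),
\]
and then splits on the sign of $c$, which is exactly your linking invariant $\sgn(\alpha\beta)$.

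Where your proposal goes wrong is in assigning Form~1 to the \emph{linked} case. On $\R P^1\cong S^1$ the two preimage pairs in the linked configuration separate one another, so neither pair can be collapsed to a single (cusp) point without one of its members passing through a member of the other pair; at that instant the two self-intersections would have a common preimage and hence coincide, dropping you into $\Sigma^3$. Thus the linked case admits \emph{no} cusp degeneration to a solitary edge and produces only Form~3. It is the \emph{unlinked} case ($c>0$) that splits: when the root of $at+bs$ does not lie between $c$ and $1$ one may slide $c\to 1$ and pass to the one-solitary and then two-solitary edge (Form~1); when that root does obstruct, and the real root of $t^3+es^3$ likewise obstructs collapsing the pair $\{(0{:}1),(1{:}0)\}$, one lands on Form~2. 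So your sentence ``the unlinked case produces Form~2'' is incomplete, and ``the linked configuration also admits a cusp degeneration'' is false.

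The bookkeeping you anticipate as the main obstacle is precisely where the argument lives: tracking the positions of the root of $at+bs$ and the real root of $t^3+es^3$ relative to the four preimage points is what decides, in the unlinked case, whether a given parametrization is edge-isotopic to Form~1 or to Form~2.
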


\begin{proof}
We again assume (after a linear transformation of both $\R P^1$ and $\R P^3$) that we have points $(1{:}0)$, $(0{:}1)$ that are sent to $(0{:}0{:}0{:}1)$ while $(1{:}1)$ and $(1{:}c)$ (with $c\neq1,0$) is sent to $(0{:}0{:}1{:}0)$. This gives us the following form:
$$x_0(s,t)=ts(t-s)(t-cs)(at+bs)$$
$$x_1(s,t)=ts(t-s)(t-cs)(ct+ds)$$
$$x_2(s,t)=tsp(s,t)$$
$$x_3(s,t)=(t-s)(t-cs)q(s,t).$$
The same argument as earlier allows us to put $a=d=1,b=c=0$. This gives us one term $t^4s$ with coefficient $1$ in $x_0$ and one term $ts^4$ with coefficient $c\neq0$. We use these terms to remove all such terms from $x_2,x_3$ giving us something on the following form:
$$x_0(s,t)=t^2s(t-s)(t-cs)$$
$$x_1(s,t)=ts^2(t-s)(t-cs)$$
$$x_2(s,t)=ts(at^2s+bts^2)=t^2s^2(at+bs)$$
$$x_3(s,t)=(t-s)(t-cs)(dt^3+es^3).$$
Since we do not allow common roots of all four polynomials $d\neq0$. We can rescale the fourth polynomial and move any signs to the third to obtain $d=1$. Then we have
$$x_0(s,t)=t^2s(t-s)(t-cs)$$
$$x_1(s,t)=ts^2(t-s)(t-cs)$$
$$x_2(s,t)=t^2s^2(at+bs)$$
$$x_3(s,t)=(t-s)(t-cs)(t^3+es^3).$$
As earlier, the two first polynomials are linearly independent and we need only to watch out for common roots and  the two last polynomials being linearly dependent. They are however clearly linearly independent as long as $(a,b)\neq(0,0)$

This gives us several different cases to examine. Maybe the most interesting difference is whether $c>0$ or not, since $c>0$ gives us the possibility that our edge is the same as the edge where two solitary self intersections appeared.\\
We begin with the case $c>0$. By rescaling and relabeling the parametrization if necessary we may also assume that $c<1$.
The only further limitation on $c$ is that $(1,c)$ is not allowed to be a root of $(at+bs)$. If we can move $c$ to $1$ without hitting a root of $(at+bs)$ we can let the two self intersections become solitary and be back in an earlier case.
Otherwise we know that any eventual root $(1,f)$ of $(at+bs)$ is such that $c<f<1$. We now examine eventual roots of $t^3+es^3$. If $e<0$ we can move the roots $(0,1)$ and $(1,0)$ towards each other until they unite and become complex, again returning us to an earlier case. So we can from hereon assume that $e>0$ and so let $e=1$. Also, since $(0,1)$ was not a root of $(at+bs)$ we can put $a=\pm 1$. We now have
$$x_0(s,t)=t^2s(t-s)(t-cs)$$
$$x_1(s,t)=ts^2(t-s)(t-cs)$$
$$x_2(s,t)=\pm t^2s^2(t-fs)$$
$$x_3(s,t)=(t-s)(t-cs)(t^3+s^3)$$
with $0<c<f<1$.
We can move $f,c$ until we have $c=1/3,f=2/3$ giving us two cases that are mirror images of each other:
$$x_0(s,t)=t^2s(t-s)(3t-s)$$
$$x_1(s,t)=ts^2(t-s)(3t-s)$$
$$x_2(s,t)=\pm t^2s^2(3t-2s)$$
$$x_3(s,t)=(t-s)(3t-s)(t^3+s^3).$$
This edge will separate knots of writhe $\pm2,\pm4,\pm4,\pm6$, taking the mirror image will of course switch the sign. It is independent of orientation.
We also have the other case, $c<0$, again with the form:
$$x_0(s,t)=t^2s(t-s)(t-cs)$$
$$x_1(s,t)=ts^2(t-s)(t-cs)$$
$$x_2(s,t)=t^2s^2(at+bs)$$
$$x_3(s,t)=(t-s)(t-cs)(t^3+es^3).$$
Again the only demands on $a,b,c$ are that $(a,b)\neq(0,0)$, $c>0$ and $ac+b\neq0$. We have two cases to examine. If $ac+b<0$ we can move $b$ towards $-\infty$ and then move $c$ to $-1$ while keeping $ac+b<0$. If $ac+b>0$ we can do the same but with $b$ becoming some big positive number. So we can assume that $c=-1$. The demand will then become that $a\neq b$. We also have the demand that $e\neq0$. Our new form for the polynomials are:
$$x_0(s,t)=t^2s(t^2-s^2)$$
$$x_1(s,t)=ts^2(t^2-s^2)$$
$$x_2(s,t)=t^2s^2(at+bs)$$
$$x_3(s,t)=(t^2-s^2)(t^3+es^3).$$
Since $a\neq b$ we can move $a$ to zero and $b$ to either $1$ or $-1$, since our only demand on $e$ is that $e\neq0$ we can normalize it to let $e=\pm 1$.
$$x_0(s,t)=t^2s(t^2-s^2)$$
$$x_1(s,t)=ts^2(t^2-s^2)$$
$$x_2(s,t)=bt^2s^3$$
$$x_3(s,t)=(t^2-s^2)(t^3+es^3).$$
This gives us four options, $b=\pm 1,e=\pm 1$. Starting with the case $b=e=1$ we can get the other three cases by switching orientation of the knot and/or taking the mirror image. Taking the mirrorimage will switch the writhe, so this will truly give us a new edge. This edge will separate knots of writhes $\pm0,\pm2,\pm2,\pm4$, the sign will switch when taking the mirror image.\\
\end{proof}

\begin{remark}
This classification immediately gives us a complete smooth isotopy classification of knots of degree five by just perturbing our canonical forms (resolving the self intersections in one of two ways each) giving explicit parametrizations from each possible component of the space of knots of degree 5.
\end{remark}

Rigid isotopy classification needs some further work. By examining the canonical forms, it is clear that we have the following (possibly) different edges, the edge arising from two nonreal pairs in the preimage is separating knots with writhe $0,0,2,-2$, we have at most two edges separating writhe $0,2,2,4$ and one separating $2,4,4,6$. We also have their mirror images, however a rigid isotopy classification of knots with non-negative writhe would for symmetry reasons give a complete rigid isotopy classification. We will consider curves on a wall separating knots with writhe $k-1$ and $k+1$ to have writhe $k$.

\begin{lemma}\label{slide}
Given a wall of writhe $w>0$ (that is, separating knots of writhe $w-1,w+1$) in the space of curves of degree $5$ it is adjacent to an edge separating knots with writhe $w+1,w-1,w-1,w-3$.
\end{lemma}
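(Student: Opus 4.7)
The plan is to mimic the height-function deformation used in the proof of Lemma \ref{closeedge}, starting now from a point on the given wall and arranging the new self-intersection to appear at a positively signed projection crossing; controlling this sign will determine where $C$ sits among the four walls around the resulting edge. First I would pick a generic point $C$ of the wall, so $C$ parametrizes a degree $5$ rational curve with a single three-dimensional self-intersection at a point $p$, and the writhe $w$ of $C$ equals the sum of signs of the remaining real and solitary double points in a generic plane projection. Since $w>0$, at least one such double point $q$ carries sign $+1$.

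Next I would run the construction of Lemma \ref{closeedge}. Fix a generic plane projection and the denominator of the degree $5$ rational height function, so that all projection double points (including $p$ and $q$) are preserved, and vary only the numerator. Requiring equal heights at the two preimages of $p$ keeps $p$ a three-dimensional self-intersection, and imposing the analogous condition at $q$ forces the two branches over $q$ to meet in space. After scaling out, this leaves three degrees of freedom, which, as in the proof of Lemma \ref{closeedge}, is enough to perturb to a non-planar degree $5$ rational curve $E$ whose only self-intersections are at $p$ and $q$. Thus $E$ lies on an edge of $\Sigma^2\setminus\Sigma^3$, and $C$ is one of its four adjacent walls.

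Computing writhes around $E$ is then straightforward: writing $w_0$ for the sum of signs at the four projection double points other than (the projections of) $p$ and $q$, the four knots adjacent to $E$ have writhes $w_0+\varepsilon_p+\varepsilon_q$ for $\varepsilon_p,\varepsilon_q\in\{\pm 1\}$. Since $q$ contributed sign $+1$ to $w$ but is no longer counted in $w_0$, we have $w_0=w-1$, and the four adjacent writhes are $w+1$, $w-1$, $w-1$, $w-3$. The wall $C$ sits on the side where $\varepsilon_q=+1$, consistent with its writhe being the average $w$ of $w\pm 1$.

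The main obstacle I anticipate is handling the real and solitary cases for $q$ uniformly. For real $q$, the condition equating the heights at its two real preimages is a single real linear constraint on the numerator, exactly as in Lemma \ref{closeedge}. For solitary $q$ with complex conjugate preimages, the corresponding condition on the real height function is again a single real linear constraint (the imaginary part of the difference vanishes automatically), so the dimension count and construction go through unchanged. A secondary concern, handled by a generic-choice argument inside the three-parameter family, is avoiding extra coincidences at the remaining projection double points.
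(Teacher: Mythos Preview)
Your approach is close to the paper's---both use the height-function deformation of Lemma \ref{closeedge} starting from a wall point, aiming to produce a second self-intersection at a positively signed projection crossing. But there is a genuine gap in your adjacency claim.

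You construct $E$ by imposing both the $p$- and $q$-conditions and then perturbing within the remaining $3$-parameter family so that $E$ is non-planar with exactly two self-intersections. That indeed places $E$ on an edge. The unsupported step is ``$C$ is one of its four adjacent walls.'' The four walls meeting $E$ are specific connected components of $\Sigma^1\setminus\Sigma^2$; to conclude that the component containing $C$ is among them you need a path in $\Sigma^1\setminus\Sigma^2$ from $C$ to a neighbourhood of $E$. Your construction does not supply one. If you try to produce such a path by letting the relative height at $q$ shrink to zero while keeping $p$ self-intersecting, nothing prevents one of the \emph{negative} projection crossings from equalising first; you would then land on an edge whose second self-intersection carries sign $-1$, giving adjacent writhes $w+3,\,w+1,\,w+1,\,w-1$ rather than the ones you want. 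The generic-choice argument you mention controls what happens \emph{at} $E$, not what happens along the way.

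The paper sidesteps this by reversing the logic: rather than targeting a specific positive crossing, it notes that since $w>0$ there are at most two negatively signed double points in the projection, and it fixes the relative heights at those (together with the $p$-condition). Moving freely in the remaining directions until an edge is reached, the new self-intersection can only occur at one of the unprotected crossings, all of which are positive (nonreal ones are codimension two and avoided generically). Adjacency is then automatic, because the motion was inside the wall the whole time. Your argument is easily repaired by adopting this protection of the negative crossings in place of targeting $q$.
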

\begin{proof}
Choose a wall with writhe $w>0$ and choose a projection to a plane. This projection will have at most $2$ double points with negative writhe and it will also have a doublepoint arising from the self intersection. By again varying the height function keeping these three points at the same relative height we can (without ending up in a plane) disturb our wall until we end up at an edge. One of the projected double points will then have to be from a self intersection, since we kept that from happening at the points with negative writhe one of the double points with positive writhe must have turned into a self intersection.
\end{proof}
\begin{remark}
The lemma says that we can always {\em slide} towards an edge of lower writhe. This enables us to connect our edges with the help of the adjacent walls.
\end{remark}

\begin{proof}[Proof of the second part of Theorem \ref{rigidclass}]
In the proof we will use Lemma \ref{slide} several times to go from a wall separating knots with positive writhes to an edge adjacent to it separating knots with lower writhes. We will refer to this process as {\em sliding}. By Lemma \ref{closeedge} we know that any knot component lies adjacent to an edge with two self intersections, so it is enough to classify the knots close to edges up to rigid isotopy.
By Lemma \ref{2solitary}, Lemma \ref{1sol1nonsol} and Lemma \ref{2nonsol} we have four cases to examine if we search for a rigid isotopy classification for knots with nonnegative writhe. A simple examination reveals (at most) two edges separating $0,2,2,4$ which we denote with $E,E'$. The difference between them was choice of orientation. We also had one edge separating knots with writhe $-2,0,0,2$ which we call $F$ and finally one edge separating knots of writhe $2,4,4,6$ which we call $G$. Both $F,G$ were independent of orientation. We begin by proving that there is only one knot (up to rigid isotopy) with writhe $2$. Take a wall close to the $E$ edge separating knots with writhe $0,2$. Slide it towards the $-2,0,0,2$ edge. The $2$ writhe knots from $E$ and from the $-2,0,0,2$ edge must then be the same. We use the same argument on $E'$:s two writhe $2$ knots to show that they are all the same. We take a wall separating knots with writhe $2,4$ close to the $2,4,4,6$ edge and slide it to wards either $E$ or $E'$. Wherever we end up, the writhe $2$ knot must join up with the writhe $2$ knot from $E,E'$. So there is but one knot of writhe $2$. It is trivial that there is but one knot of writhe $6$. We now prove that there is only one knot (again up to rigid isotopy) of writhe $4$. We examine the two edges $E,E'$ closer. We recall that the last polynomial $x_1(s,t)=(t^2-s^2)(t^3\pm s^3)$ was what separated them. Perturb the crossing arising from $(0,1)$ and $(1,0)$ slightly so that we end up at a wall separating $2,4$.

The parametrization then looks as follows:
$$x_0(s,t)=t^2s(t^2-s^2)$$
$$x_1(s,t)=(\epsilon s^2+ts)s(t^2-s^2)$$
$$x_2(s,t)=t^2s^3$$
$$x_3(s,t)=(t^2-s^2)(t^3+us^3)$$
with $u=\pm1$ depending on if we are close to $E$ or $E'$. Now we just move along the wall by letting $u$ go from $1$ to $-1$. We end up at the other edge implying that the two $4$ writhe knots arising from $E,E'$ are really one and the same. It is left to show that they are the same knot as the two $4$ writhe knots arising from the $2,4,4,6$ edge. Choose a on a wall separating one of these writhe $4$ knots from a writhe $2$ knot and slide towards lower writhe, then we end up at either $E$ or $E'$, whatever the case, we connect our writhe $4$ knots. The only thing left to show is that there is only one knot of writhe $0$. We repeat the argument from writhe $4$ to show that the two writhe $0$ parts arising from $E,E'$ are the same (by just choosing an opposite sign of $\epsilon$. So if we slide towards the $-2,0,0,2$ edge from a wall close to those knots we will end up with an orientation independent writhe $0$ knot close to the $-2,0,0,2$ edge. We study this edge closer.

$$x_0(t,s)=(t^2+s^2)(t^2+4s^2)t$$
$$x_1(t,s)=(t^2+s^2)(t^2+4s^2)s$$
$$x_2(t,s)=(t^2+s^2)t^2s$$
$$x_3(t,s)=(t^2+4s^2)ts^2.$$
The two knots with writhe zero could then be represented as
$$x_0(t,s)=(t^2+\mp\epsilon ts+s^2)(t^2 \pm\epsilon ts+4s^2)t$$
$$x_1(t,s)=(t^2+s^2)(t^2+4s^2)s$$
$$x_2(t,s)=(t^2+s^2)t^2s$$
$$x_3(t,s)=(t^2+4s^2)ts^2.$$
Note that they differ by a change of orientation and we knew that one of them (the one we got from sliding from $E,E'$) was orientation independent so they must be in the same component.

Analogously the arguments can be applied to the mirror images to get the same result for negative writhes. Since there is one and only one knot (considered up to rigid isotopy) of each possible writhe we have proved the theorem.
\end{proof}

\subsection{The knots $K_d^w$}\label{kdw}
In this section we give an algorithm to construct certain knots. We denote these knots $K_d^w$. It turns out that they realize each possible writhe $w$ for a given degree $d$.

We construct a collection of oriented lines as follows:\\
Look at the affine part $\R^3\subset\R P^3$. Place a straight line vertically oriented upwards which has very slight slope. Call this line $L_1$. Inductively we construct $d-1$ more lines. Take an identical copy of line $L_n$ , turn it slightly clockwise and give it a slightly higher slope and move it along the $L_n$ line slightly upwards. Let this be $L_{n+1}$. Do this for $d$ lines.\\
This construction gives $d$ lines, where each line intersects another line. By resolving these intersections we will get a rational knot of degree $d$. It will have writhe $\frac{(d-1)(d-2)}{2}$ since there is that many double points after a projection, and at each such point we will have positive writhe. Let this knot be $K_d^{\frac{(d-1)(d-2)}{2}}$. An example for $d=4$ is given in Fig. \ref{K43}.\\
\begin{figure}\label{K43}
\includegraphics[scale=0.35]{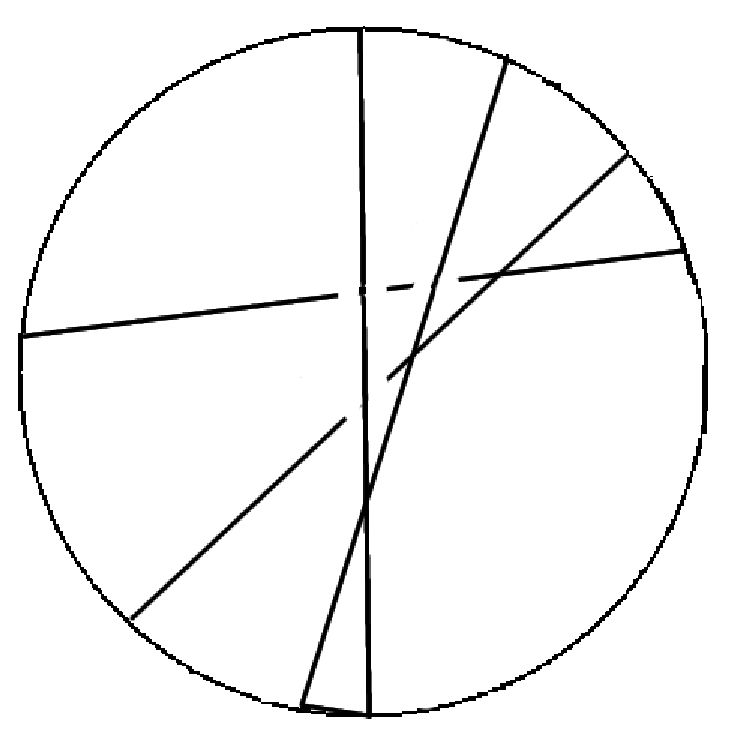}
\includegraphics[scale=0.35]{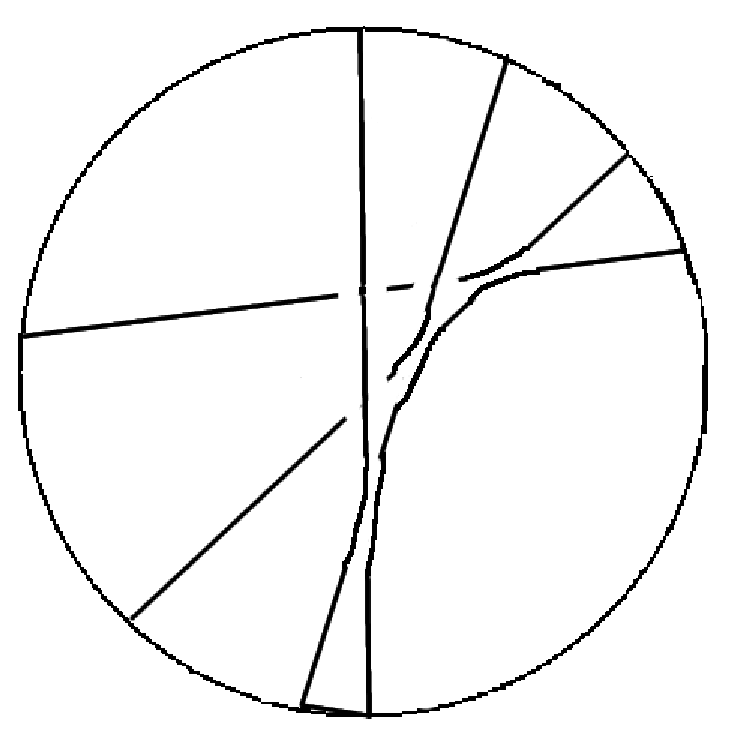}
\includegraphics[scale=0.35]{2crossingknot.eps}
\caption{The construction of $K_3^4$}
\end{figure}

We use the same construction of lines but change the slope of the last line while keeping the intersection fixed with the second last line. Each time it crosses one of the other lines the writhe (after an eventual resolution) decreases by two and we get a new knot. We keep doing this until it has passed all the other lines and has a highly negative slope. Take the second to last line and repeat until you have crossed all lines except the last. Each time we cross one of the other lines the writhe will decrease by two. Continue this process with the second last line keeping intersections with the last and third last line by just moving the last line vertically and then continue. In this way we will go $2$ steps at a time from the least writhe to the highest writhe (when all crossings have a positive writhe number) and so realize each possible writhe number $w$ and then name the resulting knot as $K_d^w$. Since we have used $d$ lines (which are of degree 1) the knot gluing operation then states that this is a rational knot of degree $d$, and by the construction it will have writhe $w$. It is obvious that any two members of such a class are rigidly isotopic since the only thing left to our choice is the individual slopes and points of intersection which can gradually be changed.

\begin{proof}[Proof of Theorem \ref{allwrithe}]
Follows trivially from the construction of $K_w^d$.
\end{proof}
\begin{remark}
The construction is not very symmetric so one cannot be sure that the mirror image of $K_d^w$ is necessarily (rigidly isotopic to) $K_d^{-w}$.
\end{remark}

\begin{remark}
Since $K_d^w$ attains each possible writhe for the given degree $d$ this proves that the lemmas stated earlier are sharp.
\end{remark}
\begin{remark}
Since we have found all rigid isotopy classes of degree $d\leq5$ we can conclude that $K_d^w$ are the only rigid isotopy classes of degree $d\leq5$.
\end{remark}
\begin{remark}\label{deg6counter}
However the knots $K_d^w$ are not the only knots possible (for higher degrees) so writhe and degree together does not tell us everything about a knot. Figure \ref{counter6} gives an example of two 6th degree knots, both with the same writhe but not isotopic to each other and hence not rigidly isotopic.
\end{remark}

\begin{figure}\label{counter6}
\includegraphics[scale=0.35]{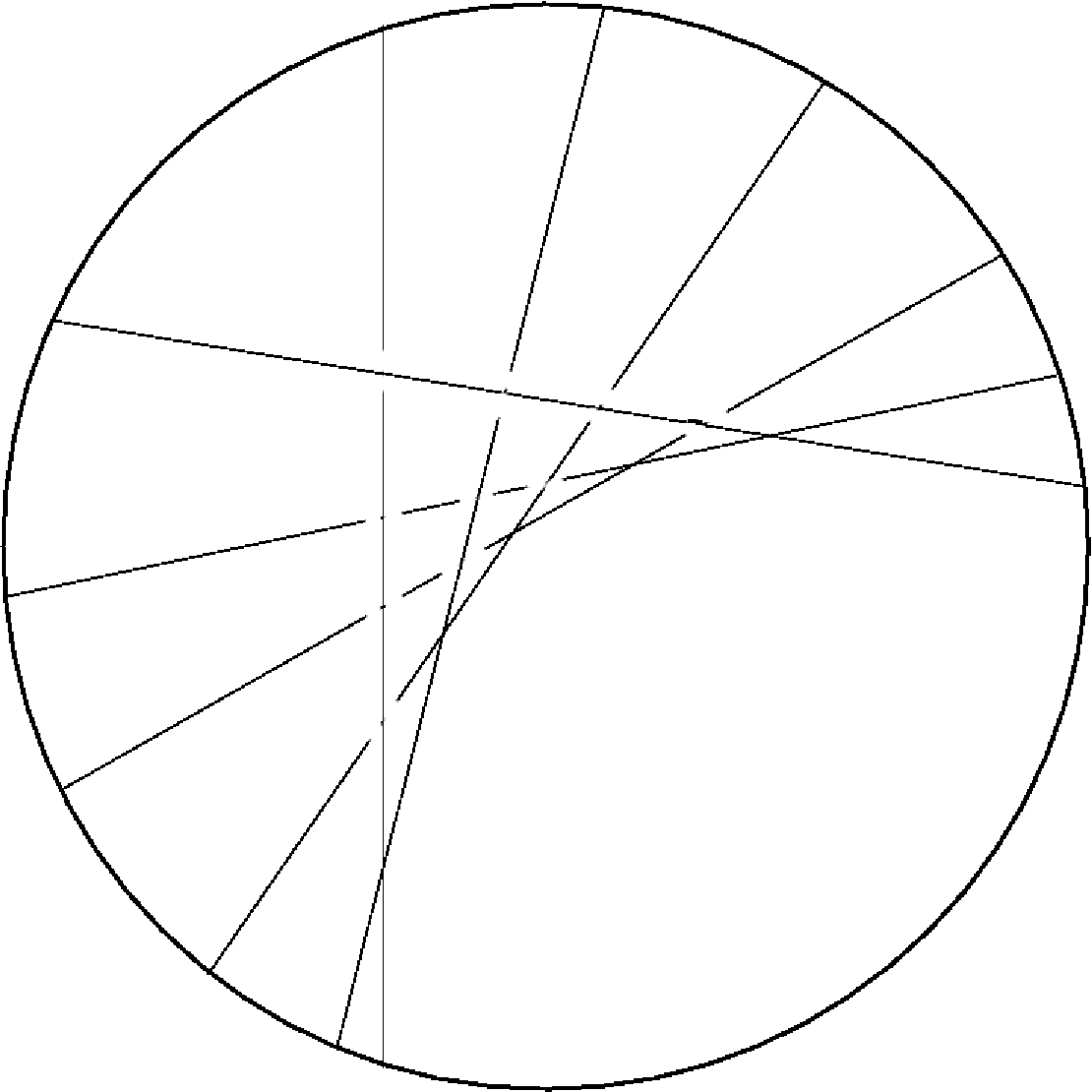}
\includegraphics[scale=0.35]{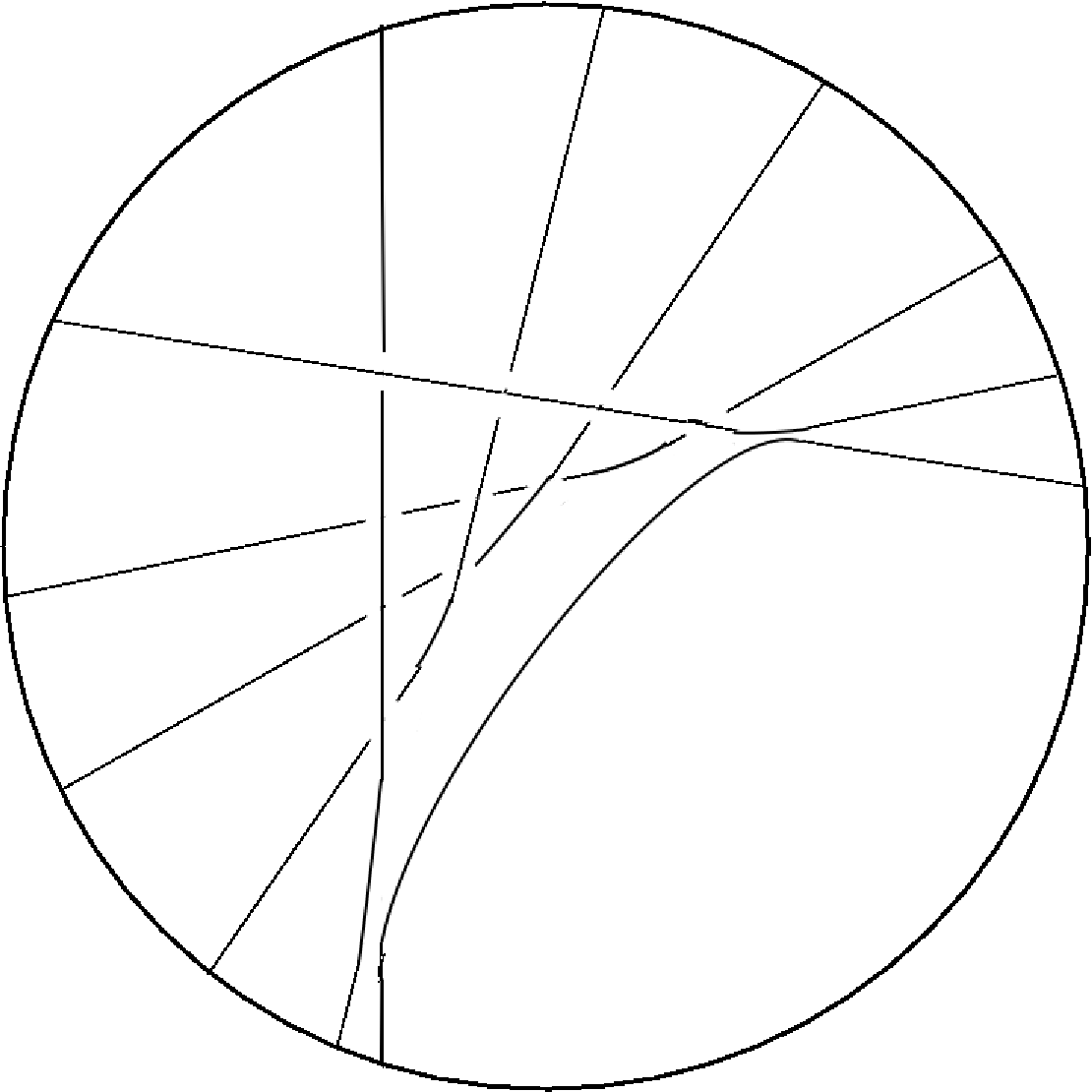}
\includegraphics[scale=0.35]{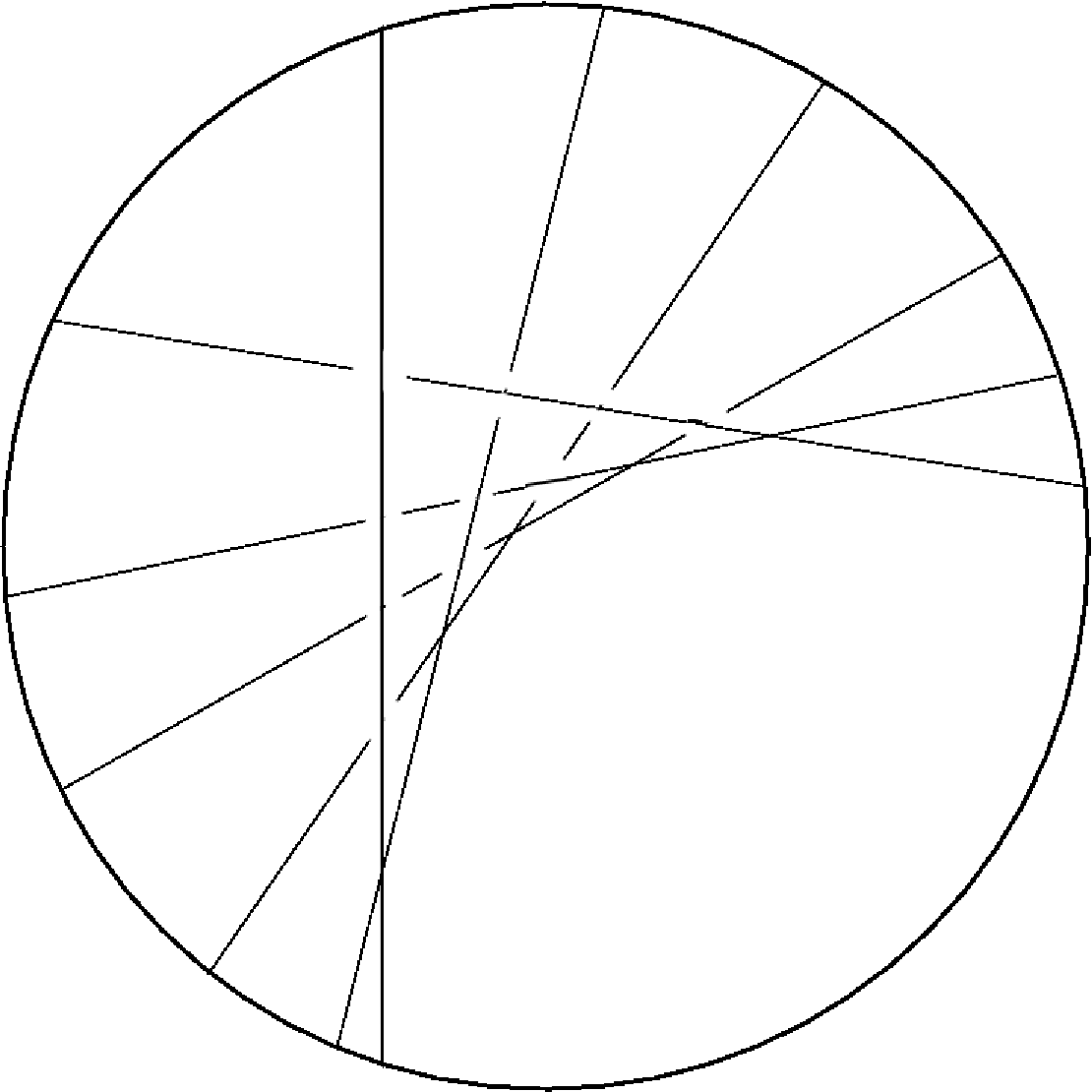}
\includegraphics[scale=0.35]{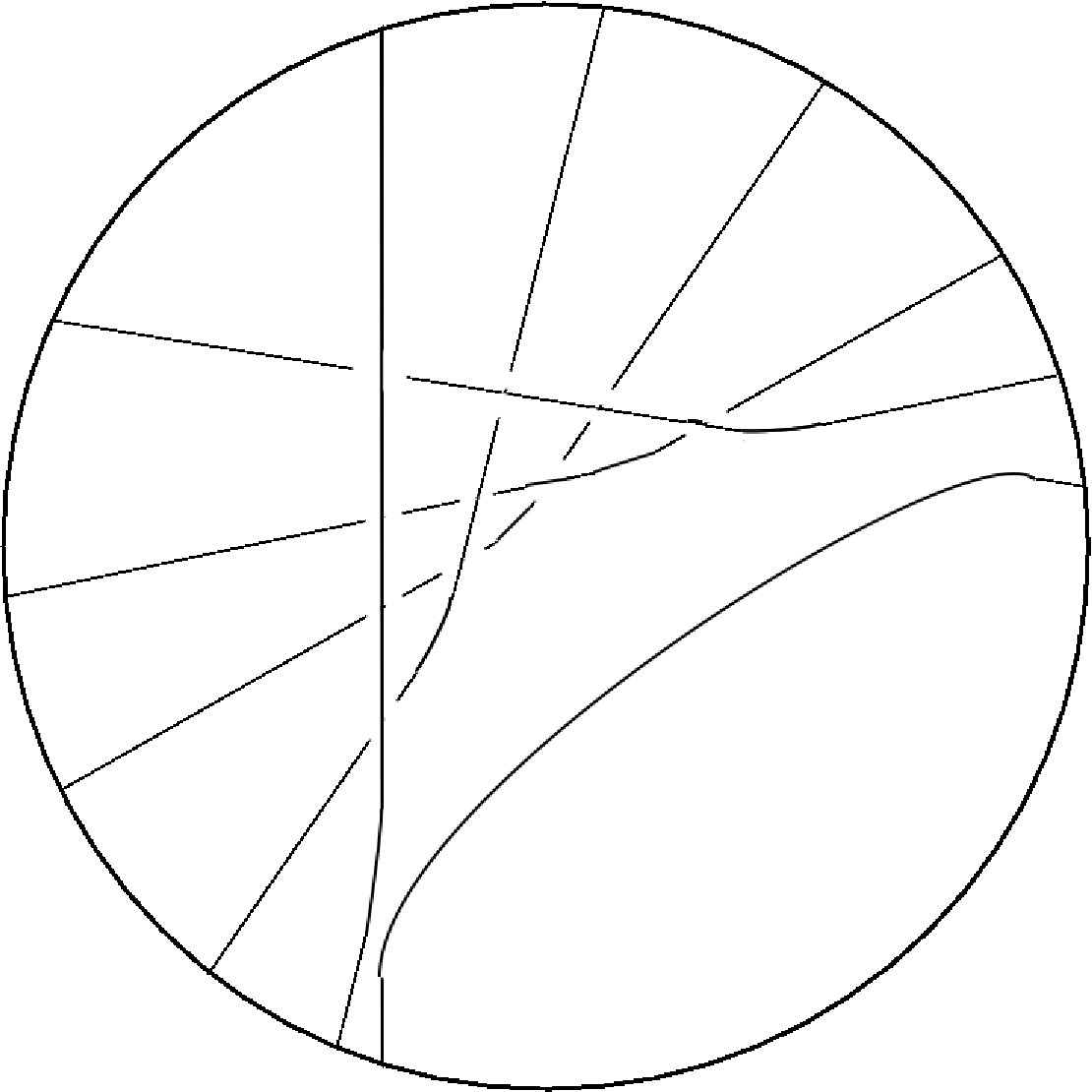}
\caption{Writhe is not enough to distinguish knots in degree 6}
\end{figure}

\clearpage
\subsection{Topological isotopy classification}

Since rigid isotopy implies topological isotopy we can just use Theorem \ref{rigidclass} to get a collection of all possible knots of degree $d\leq5$ and then consider them up to topological isotopy for a smooth isotopy classification.


\begin{theorem}\label{4knots}
All irreducible knots with diagrams that has three or four crossings that cannot be transformed to diagrams with fewer crossings have rational degree 5 or 6.
\end{theorem}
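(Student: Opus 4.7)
The plan is to pin down the rational degree from both sides. Theorem~\ref{main} already gives the upper bound: any knot admitting a plane projection with at most four transversal double points has rational degree at most $6$, so for every knot covered by the hypothesis the rational degree is $\leq 6$. The task is therefore reduced to proving a matching lower bound, namely that no irreducible $3$- or $4$-crossing diagram can be realised by a rational parametrisation of degree $\leq 4$.

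For the lower bound I would appeal directly to the classification supplied by Theorem~\ref{rigidclass} (together with the degree $d=4$ analysis preceding it). That list tells us that the complete set of rigid isotopy types of rational knots of degree $\leq 4$ in $\R P^3$ consists of: the projective line ($d=1$, unknotted), the standard circle ($d=2$, unknotted), two unknots of degree $3$ with $w=\pm 1$, two unknots of degree $4$ with $w=\pm 1$, and the twocrossing knot with its mirror at degree $4$. I would then observe that up to smooth isotopy each of these admits a diagram with at most two crossings: the projective line and the circle have $0$-crossing diagrams, and the twocrossing knot has, by definition and by Figure~\ref{upto5}, an irreducible diagram with exactly two crossings. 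Hence every rational knot of degree $\leq 4$ in $\R P^3$ has smooth isotopy type representable by an irreducible diagram with $\leq 2$ crossings.

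Combining these two ingredients completes the argument: a knot whose minimal diagram has $3$ or $4$ crossings cannot be smoothly isotopic to any rational knot of degree $\leq 4$, so its rational degree is at least $5$; and by Theorem~\ref{main} it is at most $6$. I would close the proof by remarking that both bounds are attained, citing the long trefoil and the projective $5_3$-knot from Theorem~\ref{rigidclass} as $3$- and (after one Reidemeister move) effectively low-crossing degree-$5$ examples, and noting that the remaining irreducible $3$- or $4$-crossing diagrams not appearing in the degree-$5$ list (for instance the extra degree-$6$ knot of Remark~\ref{deg6counter}) require degree exactly $6$.

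The only genuinely delicate point is the second step, namely verifying that each of the fifteen rigid isotopy classes of degree $\leq 4$ is smoothly representable by a diagram with at most two crossings. For the line and the circle this is immediate; for the remaining cases it follows from the explicit description given in the degree $3$ and degree $4$ lemmas of Section~\ref{knotdiagrams}, where the parametrisations are either planar or isotopic to the twocrossing knot of Figure~\ref{upto5}. Once this smooth-isotopy reduction is in hand, the incompatibility with an irreducible $3$- or $4$-crossing minimal diagram, and hence the desired lower bound of $5$ on rational degree, is immediate.
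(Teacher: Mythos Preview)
Your approach is essentially the paper's own: the lower bound $\geq 5$ comes from the smooth-isotopy consequence of the degree $\leq 4$ classification (every such knot is the line, the circle, or the twocrossing knot, hence has a $\leq 2$-crossing diagram), and the upper bound $\leq 6$ comes from the explicit constructions of Section~5, which you invoke through Theorem~\ref{main}. Two small slips to clean up, neither affecting the logic: there are nine rigid isotopy classes of degree $\leq 4$, not fifteen (fifteen is the total through degree $5$), and the degree-$3$ and degree-$4$ lemmas you cite live in the classification section, not in Section~\ref{knotdiagrams}.
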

\begin{proof}
Since they are not topologically isotopic to a knot with 2 crossings or less they cannot have rational degree $\leq4$ so they must have degree larger than 4 with parity of degree decided by the parity of the homology. We construct all such irreducible knots with degree 5 or 6 in the next section.
\end{proof}

\section{Diagrams and constructions}
\subsection{Method of construction}
Two methods will be used to construct the knots. One of them is explicit parametrization. It is rather easy to get explicit parametrizations from our earlier edges by just perturbing them slightly. It is also possible to construct all the possible knots out of lines and the gluing operation from Theorem \ref{combine}. Together these two methods give all the constructions needed for Theorem \ref{main}. If necessary, explicit constructions could be recovered.
Knot diagrams and the gluing operation will be used without much explaining text. If a knot of degree $a$ and a knot of degree $b$ are combined we will write Degree $a+b$ without calculating the sum. After applying Theorem \ref{combine} the same notation with a computed sum will be used for the degree of the new knot.

\clearpage
\subsection{Knots with up to four crossings}
Here irreducible knots with 3 or 4 crossings are shown to be parametrizable by degree 5 or 6. We simply construct all affine and projective irreducible knots in the tables of Drobotukhina and Rolfsen with 3 and 4 crossings. We already know that the 2-crossing knot, the two planar knots, the long trefoil, the projective $5_3-$knot and their mirror images are parametrizable by degree $\leq5$. So it is left to check that the remaining knots with up to 4 crossings are realizable with degree 6.
\begin{figure}
\includegraphics[scale=0.35]{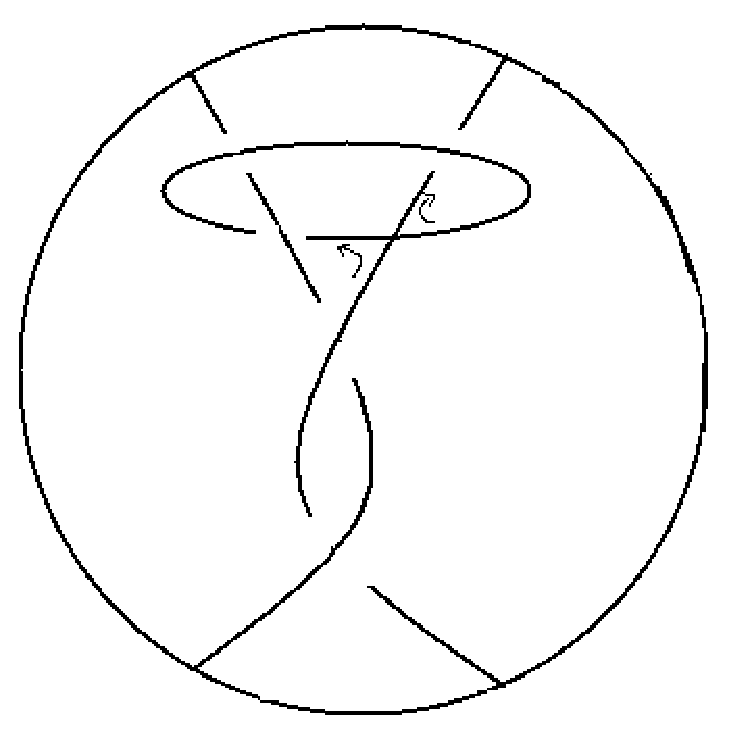}
\includegraphics[scale=0.35]{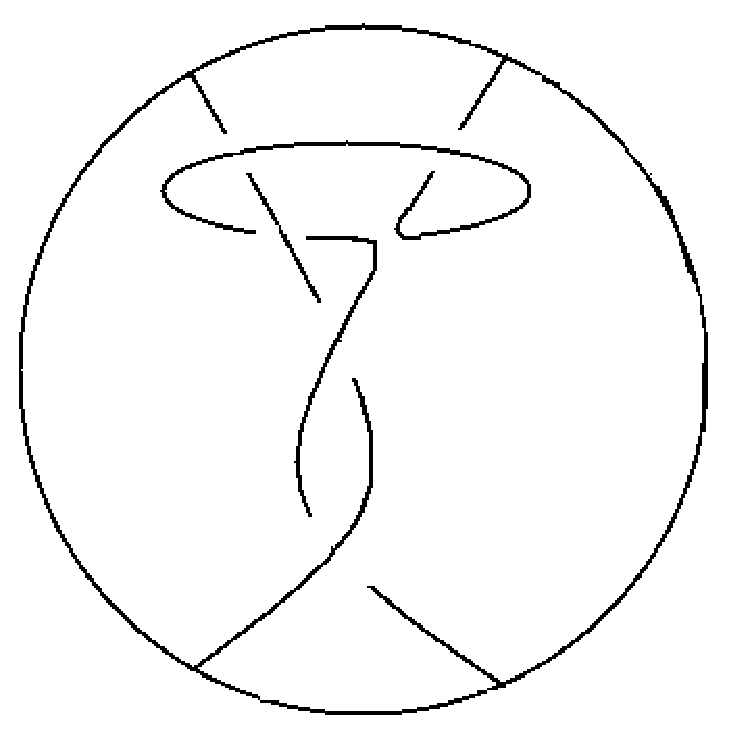}
\includegraphics[scale=0.35]{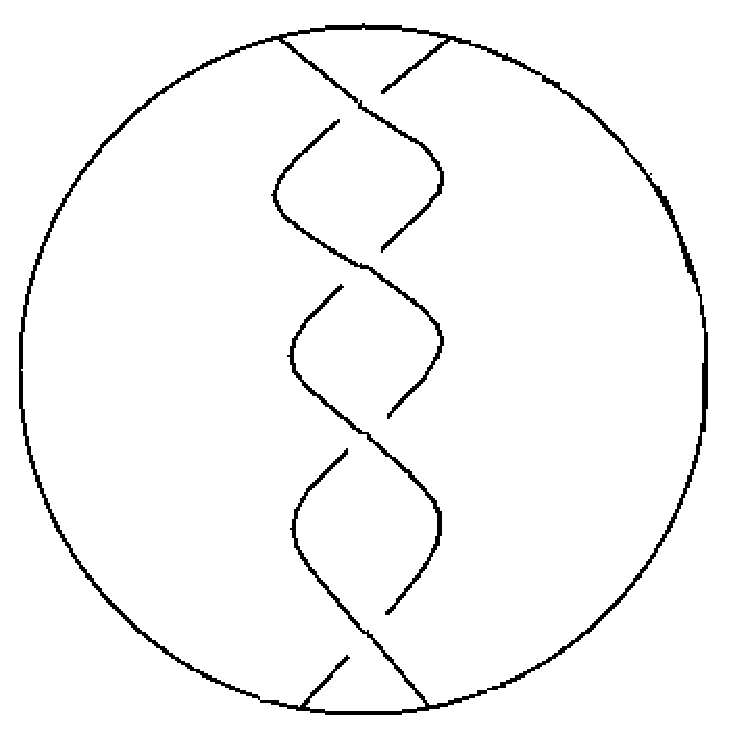}
\caption{Degree 4+2 = Degree 6}
\end{figure}

\begin{figure}
\includegraphics[scale=0.35]{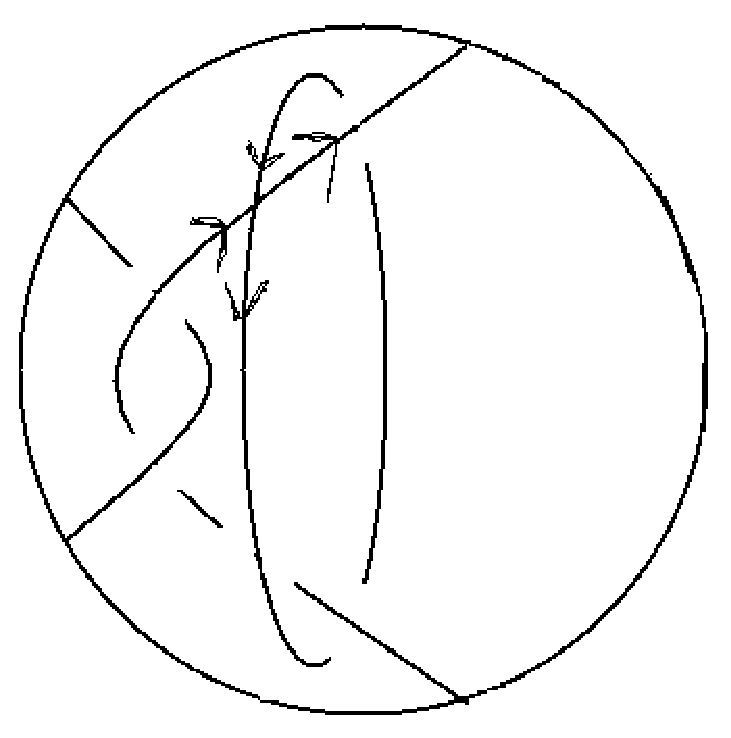}
\includegraphics[scale=0.35]{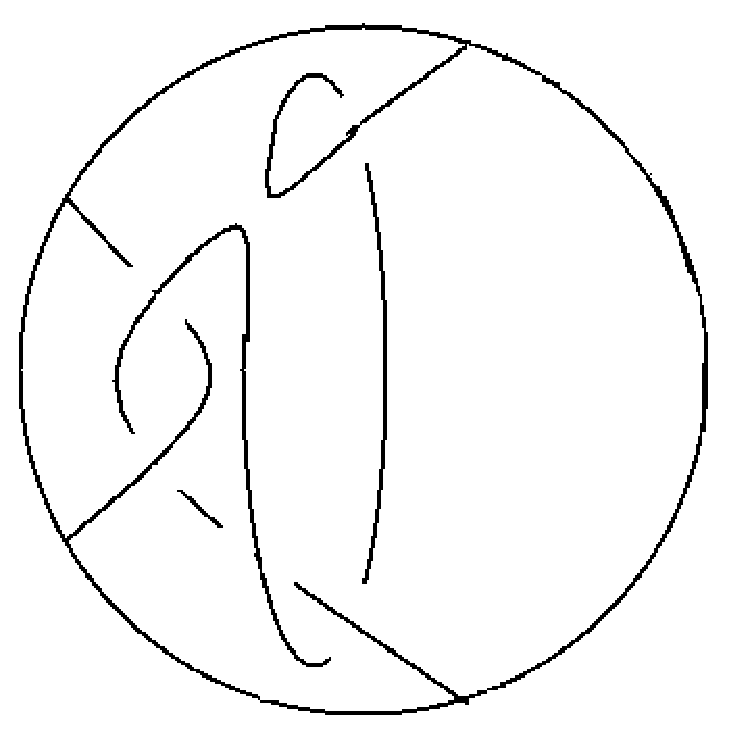}
\includegraphics[scale=0.35]{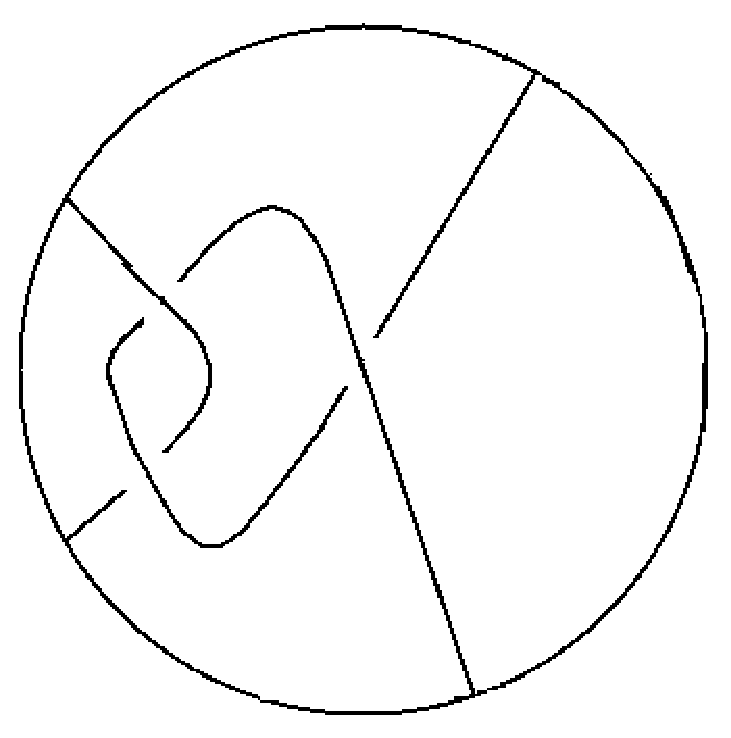}
\caption{Degree 4+2 = Degree 6}
\end{figure}

\begin{figure}
\includegraphics[scale=0.35]{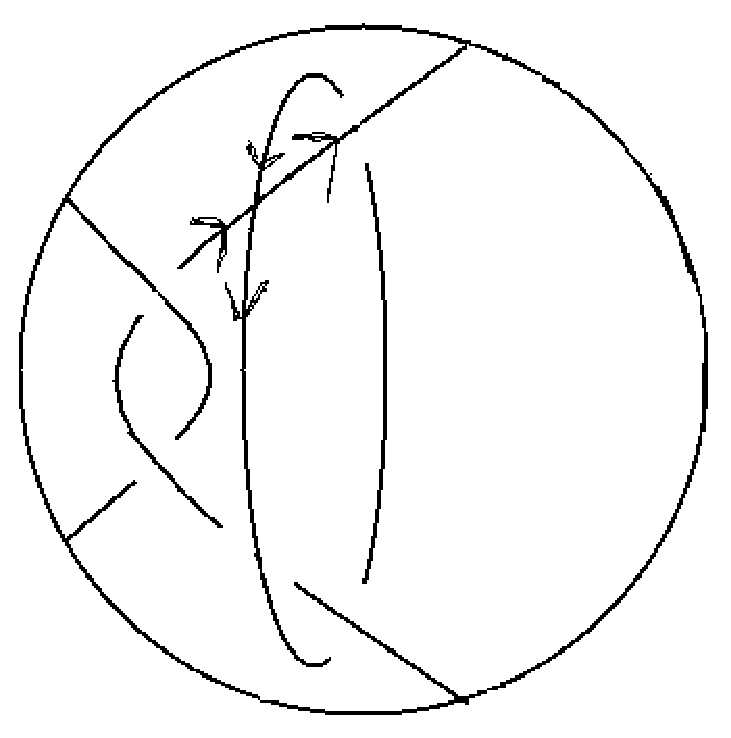}
\includegraphics[scale=0.35]{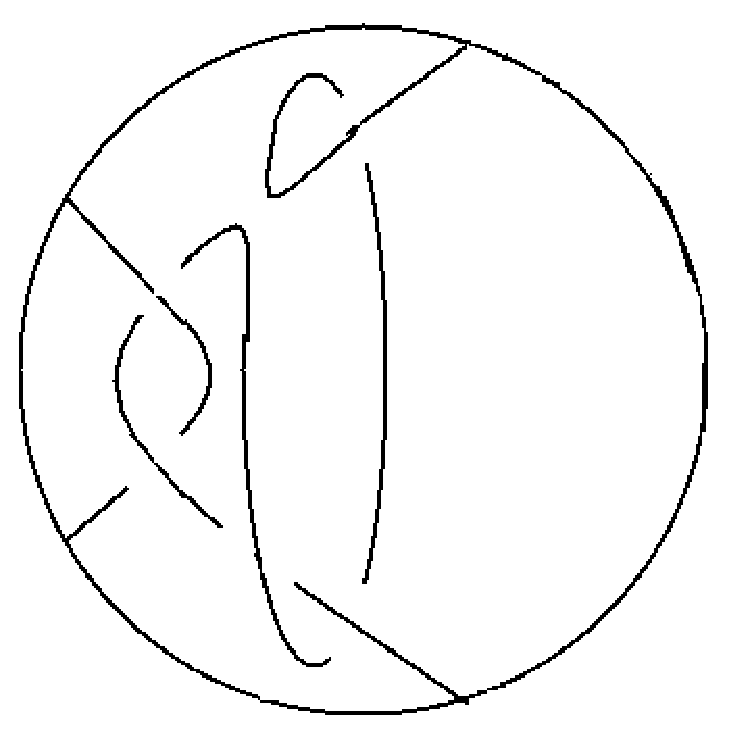}
\includegraphics[scale=0.35]{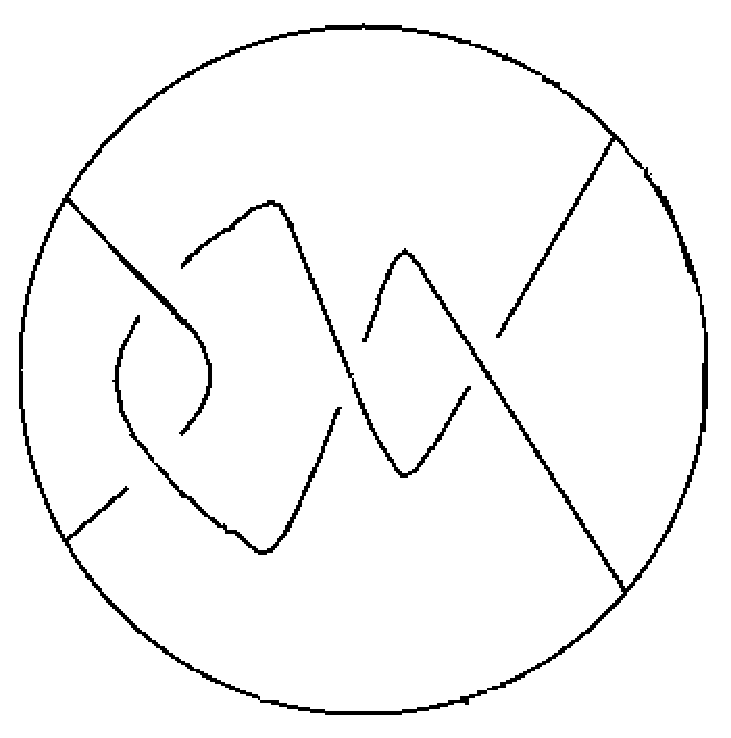}
\caption{Degree 4+2 = Degree 6}
\end{figure}

\begin{figure}
\includegraphics[scale=0.35]{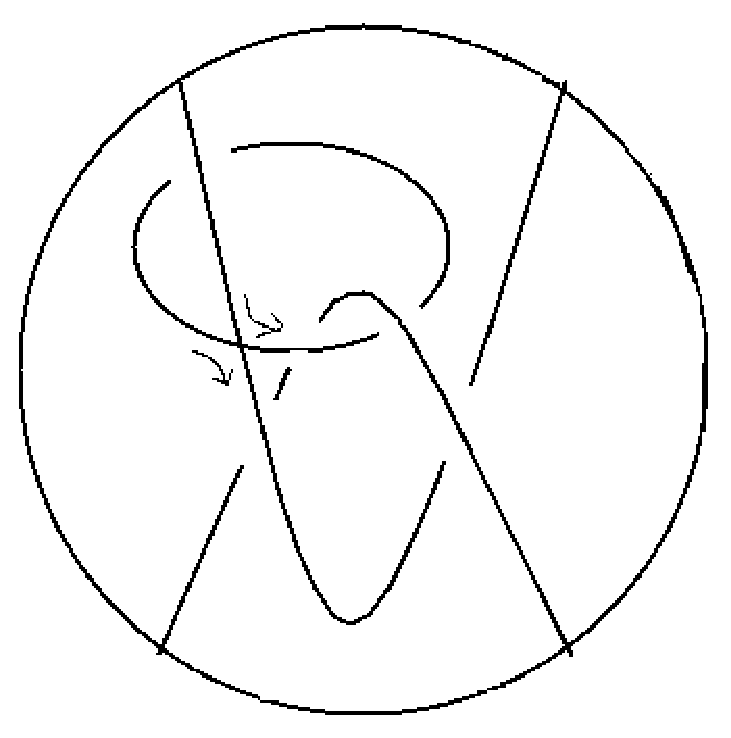}
\includegraphics[scale=0.35]{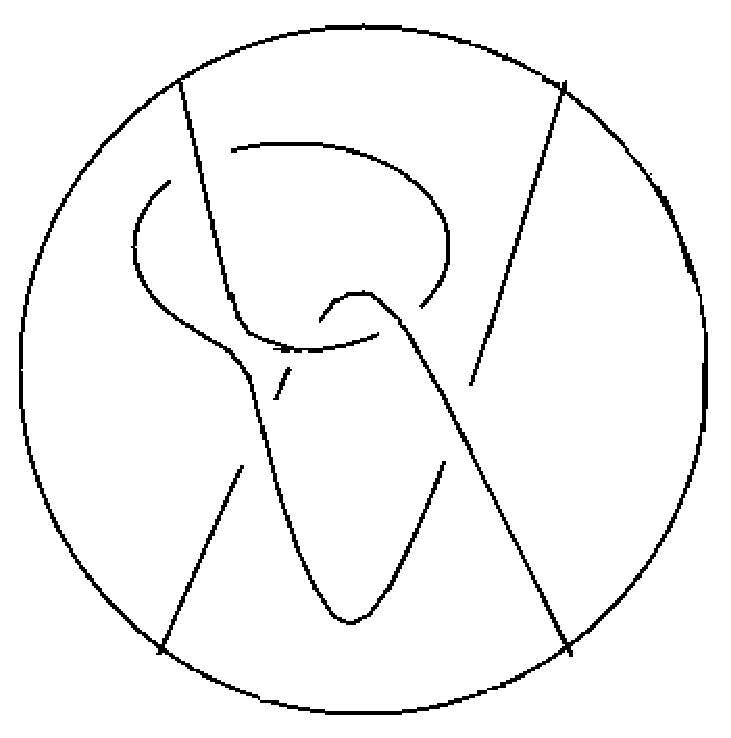}
\includegraphics[scale=0.35]{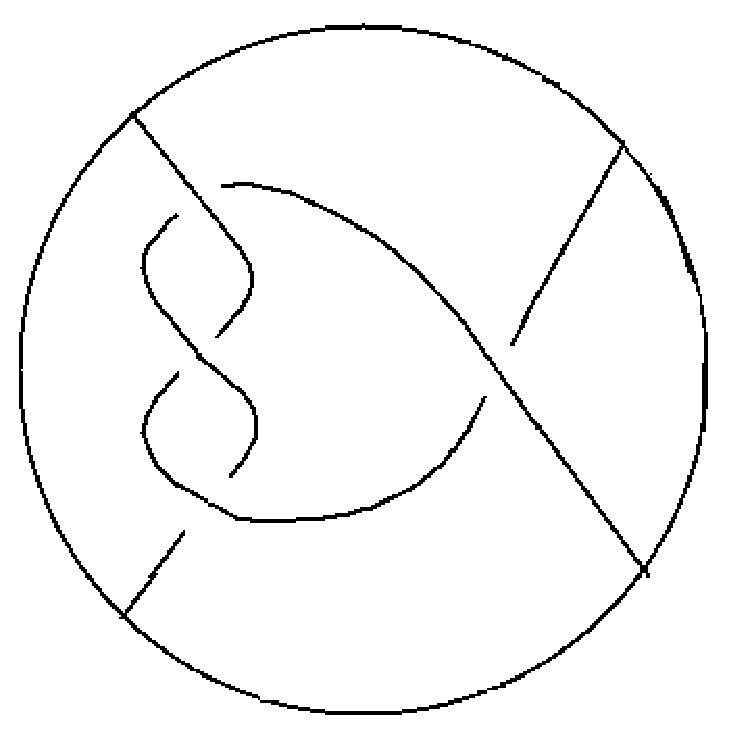}
\caption{Degree 4+2 = Degree 6}
\end{figure}

\begin{figure}
\includegraphics[scale=0.35]{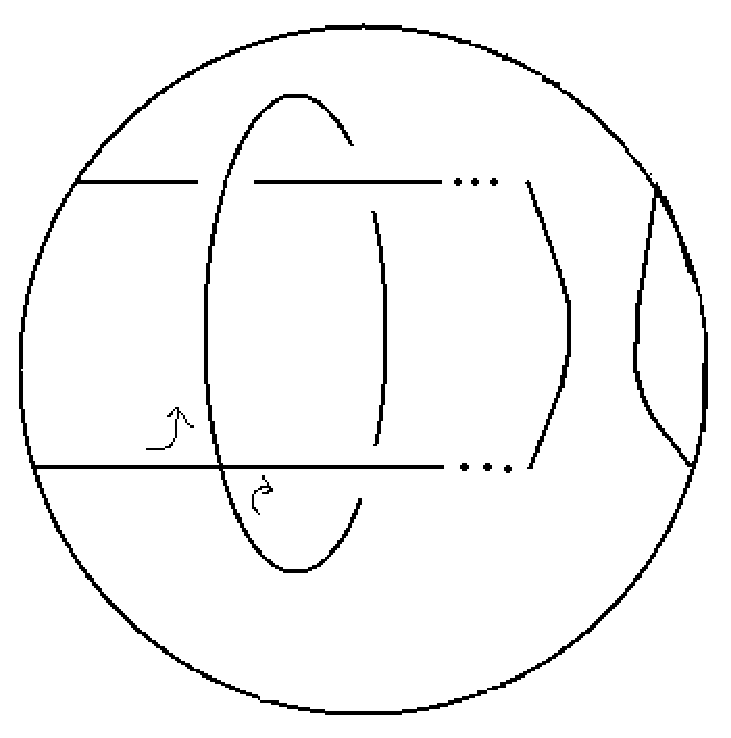}
\includegraphics[scale=0.35]{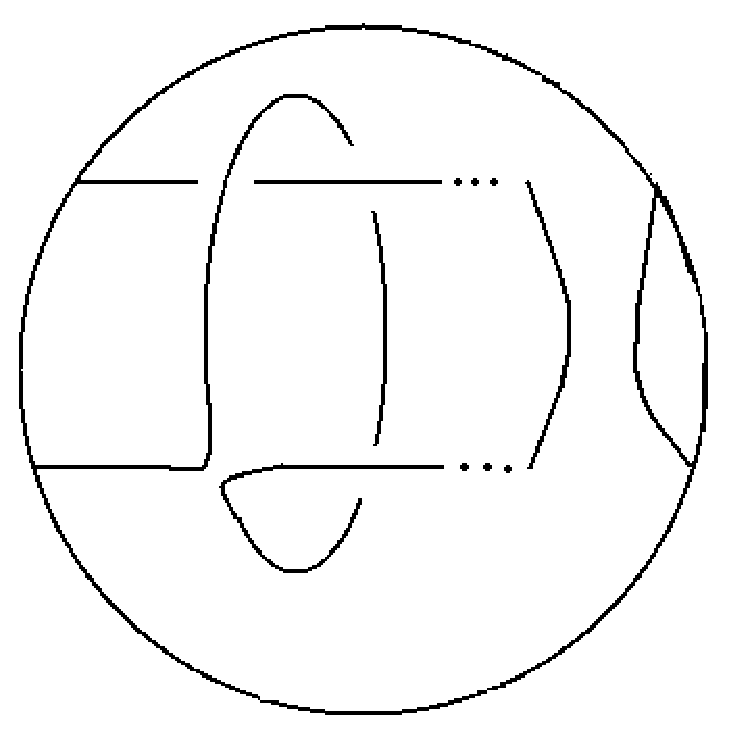}
\includegraphics[scale=0.35]{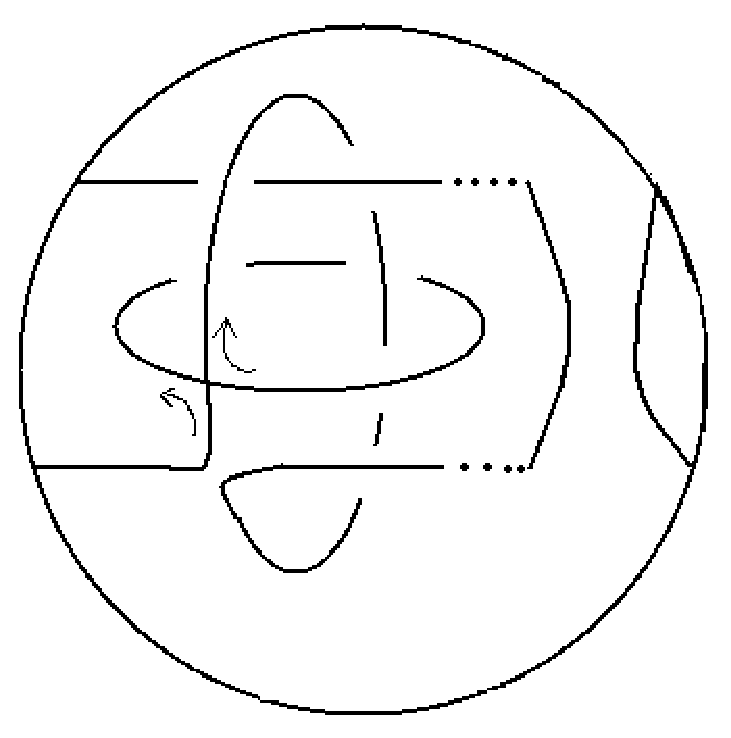}
\includegraphics[scale=0.35]{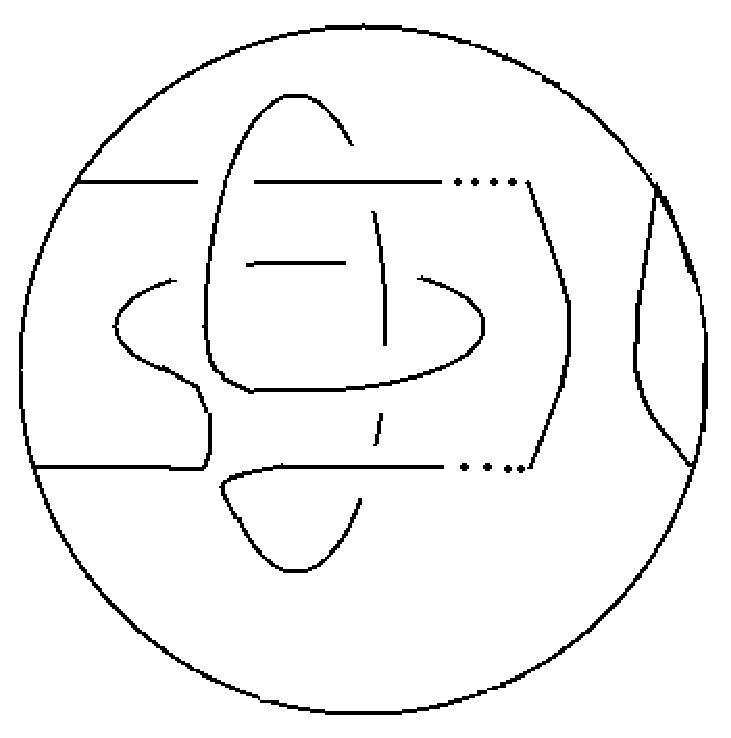}
\includegraphics[scale=0.35]{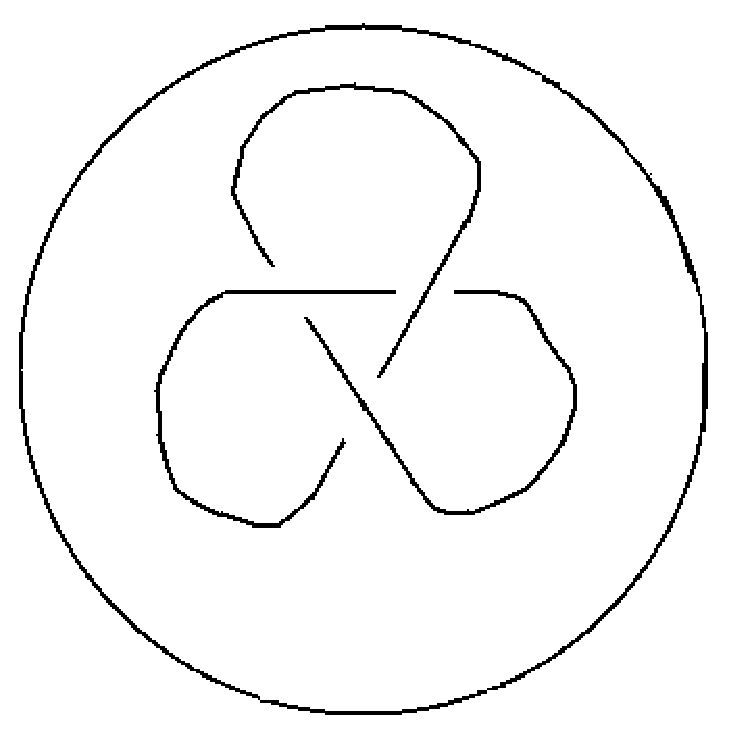}
\caption{Degree 2+2 = Degree 4, Degree 4+2 = Degree 6}
\end{figure}
\begin{figure}
\includegraphics[scale=0.35]{affinestartcomb.eps}
\includegraphics[scale=0.35]{affinestartcombd.eps}
\includegraphics[scale=0.35]{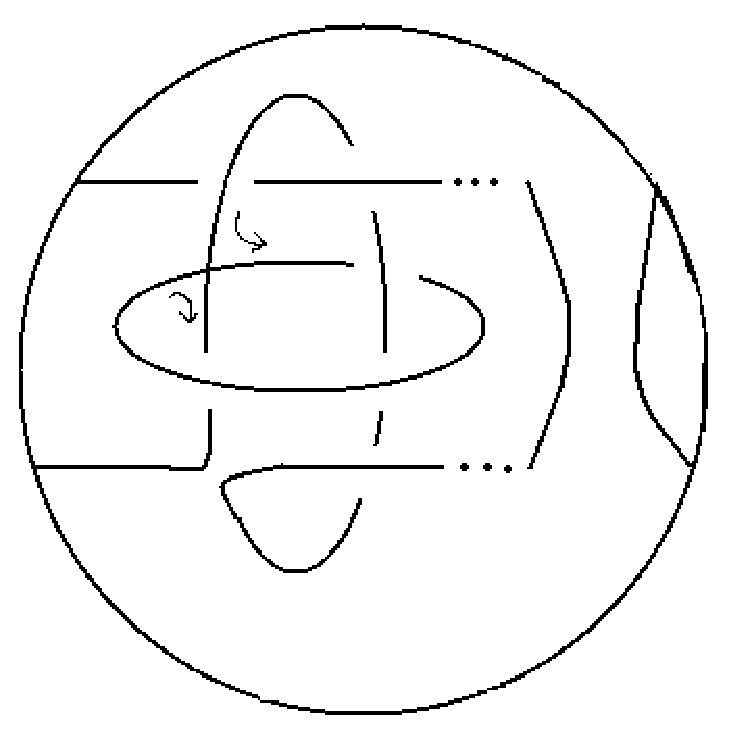}
\includegraphics[scale=0.35]{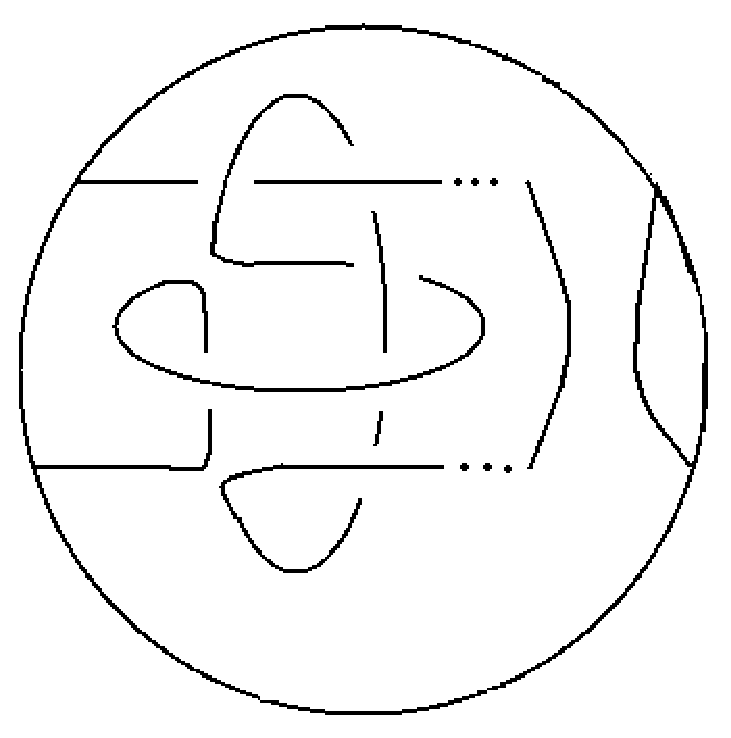}
\includegraphics[scale=0.35]{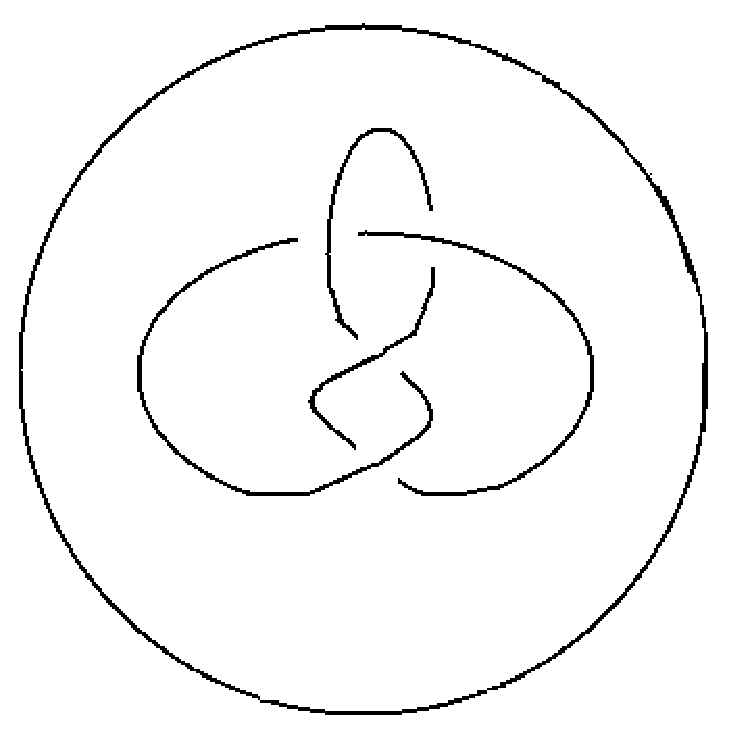}
\caption{Degree 2+2 = Degree 4, Degree 4+2 = Degree 6}
\end{figure}

\clearpage
\section*{Acknowledgments}
I would like to thank Oleg Viro and Tobias Ekholm for valuable ideas and interesting discussions.





\end{document}